\documentclass[12pt]{amsart}
\usepackage {amscd}
\usepackage[german,english]{babel}
\usepackage {amsfonts}
\usepackage[all]{xy} 
\usepackage{amssymb,amsmath, amsthm,latexsym,verbatim}
\usepackage{stmaryrd}
\usepackage{a4wide}
\usepackage[hypertex]{hyperref}\usepackage{enumitem}

\font\k=cmr7
\newcommand {\free}{\mbox{\k free}}
\newcommand {\tors}{\mbox{\k tors}}

\newcommand{\C}{\mathbb{C}}
\newcommand{\R}{\mathbb{R}}

\newcommand{\Z}{\mathbb{Z}}
\newcommand{\Q}{\mathbb{Q}}
\newcommand{\bH}{\mathbb{H}}

\newcommand{\N}{\mathbb{N}}

\newcommand {\rom}{\rho(m)}
\newcommand{\bs}{\backslash}

\newcommand{\rnk}{\operatorname{rk}}
\newcommand{\Mat}{\operatorname{Mat}}
\newcommand{\kL}{\mathfrak{k}}
\newcommand{\tL}{\mathfrak{t}}

\newcommand{\gL}{\mathfrak{g}}

\newcommand{\hL}{\mathfrak{h}}

\newcommand{\cM}{\mathcal{M}}
\newcommand{\cO}{\mathcal{O}}
\newcommand{\Gal}{\operatorname{Gal}}
\newcommand{\Rep}{\operatorname{Rep}}
\newcommand{\Gl}{\operatorname{GL}}
\newcommand{\Aut}{\operatorname{Aut}}
\newcommand{\Id}{\operatorname{Id}}
\newcommand{\sign}{\operatorname{sign}}

\newcommand{\Hom}{\operatorname{Hom}}

\newcommand{\Res}{\operatorname{Res}}

\newcommand{\SL}{\operatorname{SL}}
\newcommand{\GL}{\operatorname{GL}}
\newcommand{\SO}{\operatorname{SO}}
\newcommand{\PSO}{\operatorname{PSO}}
\newcommand{\OO}{\operatorname{O}}
\newcommand{\PO}{\operatorname{PO}}

\newcommand{\vol}{\operatorname{vol}}

\newcommand{\Int}{\operatorname{Int}}
\newcommand{\diag}{\operatorname{diag}}

\newcommand{\rank}{\operatorname{rank}}

\newtheorem {thrm}{Theorem}[section]
\newtheorem {prop}[thrm] {Proposition}
\newtheorem {lem}[thrm] {Lemma}

\theoremstyle{definition}

\theoremstyle{remark}
\newtheorem {bmrk}[thrm] {Remark}

\newtheorem {conj}[thrm]{Conjecture}

\setcounter{equation}{0}
\setcounter{section}{0}

\begin{document}
\title[]{On the growth of torsion  in the cohomology of
arithmetic groups}

\date{\today}

\author{Werner M\"uller}
\address{Universit\"at Bonn\\
Mathematisches Institut\\
Endenicher Allee 60\\
D -- 53115 Bonn, Germany}
\email{mueller@math.uni-bonn.de}

\author{Jonathan Pfaff}
\address{Universit\"at Bonn\\
Mathematisches Institut\\q
Endenicher Alle 60\\
D -- 53115 Bonn, Germany}
\email{pfaff@math.uni-bonn.de}
\keywords{Arithmetic groups, cohomology}
\subjclass{Primary: 11F75}

\begin{abstract}
In this paper we consider certain families of arithmetic subgroups of
$\SO^0(p,q)$ and $\SL_3(\R)$, respectively. We study the cohomology of
such arithmetic groups with coefficients in arithmetically defined modules.
We show that for natural sequences of such modules the torsion in the cohomology
grows exponentially.

\end{abstract}

\maketitle

\section{Introduction}

Let $G$ be a semi-simple connected algebraic group over $\Q$, $K$ a 
maximal compact  subgroup of its group of real points $G(\R)$. Let 
$\widetilde X=G(\R)/K$ be the associated Riemannian symmetric space.  Let
$\gL$ and $\kL$ be the Lie algebras of $G(\R)$ and $K$, respectively. Put
$\delta(\widetilde X)=\rank(\gL_\C)- \rank(\kL_\C)$.
Sometimes $\delta(\widetilde X)$ is called the fundamental rank. 
Let $\Gamma\subset G(\Q)$
be an arithmetic subgroup and $X=\Gamma\bs \widetilde X$ the corresponding 
locally
symmetric space. We assume that $G$ is anisotropic over $\Q$, which implies 
that $\Gamma$ is cocompact in $G(\R)$. Let $M$ be an
arithmetic $\Gamma$-module, which means that $M$ is a finite rank free 
$\Z$-module, and there exists an algebraic representation of $G$ on 
$M\otimes_\Z \Q$ such that $\Gamma$ preserves $M$. Then the cohomology
$H^*(\Gamma,M)$ is a finite rank $\Z$-module. Note that if $\Gamma$ is torsion
free, then $H^*(\Gamma,M)\cong H^*(\Gamma\bs \widetilde X,\cM)$, where $\cM$ 
is the local system of free $\Z$-modules associated to $M$.  

For arithmetic reasons, one expects that if $\delta(\widetilde X)=0$, there is 
little
torsion in $H^*(\Gamma,M)$ and the free part dominates the cohomology. On the
other hand, if $\delta(\widetilde X)=1$, one 
expects a lot of torsion in the cohomology and the free part  
to be small. This has been substantiated  by Bergeron and Venkatesh \cite{BV},
who studied the growth of the torsion if $\Gamma$ varies through a sequence of 
congrunce subgroups $\Gamma_n$ for which the injectivity radius of 
$\Gamma_n\bs \tilde X$ goes to infinity. They showed that if 
$\delta(\tilde X)=1$ and $M$ is strongly acylic, the 
torsion grows exponentially proportional to the volume of 
$\Gamma_n\bs \tilde X$.
Furthermore, for compact oriented hyperbolic 3-manifolds, in \cite{MaM} the 
growth of the torsion has been studied if $\Gamma$ is fixed but the 
$\Gamma$-module $M$ grows. More precisely, let $X=\Gamma\bs\bH^3$ be a compact,
oriented hyperbolic 3-manifold with $\Gamma\subset\SL(2,\C)$. Let $V_m$ be
the holomorphic irreducible representation of $\SL(2,\C)$ of
dimension $m+1$. By \cite{BW} one has $H^*(\Gamma,V_m)=0$.
It was proved in \cite{MaM} that for each even $k\in\N$ there
exists a $\Gamma$-invariant lattice $M_{k}\subset V_{k}$. Then 
$H^p(\Gamma,M_{k})$ is a finite abelian group for all $p$ , and the main result
of \cite{MaM} is the following asymptotic formula 
\begin{equation}\label{asymp1}
\lim_{k\to\infty}\frac{\log |H^2(\Gamma,M_{2k})|}{k^2}=\frac{2}{\pi}\vol(X),
\end{equation}
and the estimation
\begin{equation}\label{asymp2}
\log |H^p(\Gamma,M_{2k})|\ll k\log k,\quad p=1,3.
\end{equation}
Note that $H^0(\Gamma,M_{2k})=0$. 

The goal of the present paper is to study the 
growth of the torsion if $M$ varies, for all compact arithmetic quotients 
$\Gamma\bs \tilde X$ of irreducible symmetric spaces $\tilde X$ with 
$\delta(\tilde X)=1$. By the classification of simple Lie groups, the 
irreducible symmetric spaces with $\delta(\tilde X)=1$ are 
$\widetilde X=\SO^0(p,q)/\SO(p)\times \SO(q)$, for $p,q$ odd, and 
$\widetilde X=\SL(3,\R)/\SO(3)$. 
 
The first family of arithmetic groups that we consider are cocompact arithmetic 
subgroups of $\SO^0(p,q)$ that arise from quadratic forms over totally real
number fields.  More precisely, let $F$ be a totally real finite Galois 
extension of $\Q$ of degree $d>1$. We fix an embedding $F\subset \R$.
Let $Q\colon \R^{p+q}\to\R$ be a non-degenerate quadratic form defined over 
$F$ of signature  $(p,q)$. Assume that all non-trivial Galois conjugates of
$Q$ are positive definite.  Let $G:=\SO_Q\subset\GL_{p+q}$ be the special 
orthogonal group of $Q$, i.e., the subgroup of all elements of determinant one 
leaving $Q$ invariant. This is a connected algebraic group over $F$ and its
group of real points $G(\R)$ is isomorphic to $\SO(p,q)$. 

Let $\mathcal{O}_F$ be the ring of algebraic integers of $F$, and let 
$G_{\cO_F}$ be the group of $\cO_F$-valued points of $G$. Then $G_{\cO_F}$ is a 
discrete
cocompact subgroup of $G(\R)$. Via the isomorphism $G(\R)\cong 
\SO(p,q)$, it
corresponds to a discrete cocompact subgroup $\Gamma_0$ of $\SO(p,q)$ (see 
section \ref{SO(p,q)}). If we pass to an appropriate  subgroup of finite 
index $\Gamma\subset \Gamma_0$, we may assume that $\Gamma$ is torsion free 
and that it is containd in $\SO^0(p,q)$. 

Since $G$ is only defined over $F$, we
need to generalize the notion of an arithmetic $\Gamma$-module. Let 
$G^\prime=\Res_{F/\Q}(G)$ be the algebraic $\Q$-group obtained from $G$ by
restriction of scalars. Let $\Delta\colon G\to G^\prime$ be the diagonal 
embedding. Consider $\Gamma$ as a subgroup of $G_{\cO_F}$. Let 
$\Gamma^\prime=\Delta(\Gamma)$. Then $\Gamma^\prime\subset
G^\prime(\Q)$ is an arithmetic subgroup. Let $M$ be an arithmetic 
$\Gamma^\prime$-module. Since $\Gamma\cong\Gamma^\prime$, it becomes a 
$\Gamma$-module and we also call it an arithmetic $\Gamma$-module.   

To state our main result, we need to introduce some notation. 
Let $\widetilde X^d$ be the compact dual symmetric space of $\widetilde X$. 
We chose  an $\SO^0(p,q)$-invariant metric on $\widetilde X$ and equip $X$
and $\widetilde X^d$ with the induced metrics. 
Assume that $p,q$ are odd, $p\geq q$, $p>1$ and let $n:=(p+q)/2-1$. We let
$\epsilon(q):=0$ for
$q=1$ and $\epsilon(q):=1$ for $q>1$ and we put 
\begin{equation}\label{Cpq}
C_{p,q}=\frac{(-1)^{\frac{pq-1}{2}}2^{\epsilon(q)}\pi}{\vol(\widetilde{X}^d)}
\begin{pmatrix}
n\\
\frac{p-1}{2} \end{pmatrix}.
\end{equation}
Then our first main result is the following theorem.

\begin{thrm}\label{Thrm1}
Let $F$ be a totally real Galois extension of $\Q$ of degree $d>1$.
Let $\Gamma$ be a torsion free cocompact arithmetic subgroup of $\SO^0(p,q)$
derived from a quadratic form $Q$ over $F$ as above. Then there exists a 
sequence of arithmetic $\Gamma$-modules $M_m$, $m\in\N$, with the following
properties. The rank $\rnk_{\Z}(M_m)$ is a polynomial in $m$ and there exists
$C>0$, which depends only on $n$, such that
\begin{align*}
\rnk_{\Z}(M_m)=C\: d\: m^{dn(n+1)/2}+O(m^{dn(n+1)/2-1})
\end{align*}
as $m\to\infty$. Furthermore each cohomology group $H^j(\Gamma,M_m)$ is finite 
and 
\begin{equation*}
\sum_{j\geq
0}(-1)^j\log\left|H^j(\Gamma,M_m)\right|=-C_{p,q}
\vol(\Gamma\bs\widetilde X)\,m\rnk_{\Z}(M_m)+O(\rnk_{\Z}(M_m))
\end{equation*}
as $m\to\infty$. 
\end{thrm}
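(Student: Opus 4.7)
The plan is to follow the strategy initiated by Bergeron--Venkatesh \cite{BV}, but in the ``fixed lattice, varying module'' regime, reducing the alternating sum on the left-hand side to the Ray--Singer analytic torsion of a flat bundle on $X=\Gamma\bs\widetilde X$ and then extracting the sharp asymptotic via the Selberg trace formula and the Plancherel decomposition for $\SO^0(p,q)$. The restriction-of-scalars framework is what makes the coefficient module non-trivially depend on all $d$ archimedean places of $F$, while the torsion analysis itself stays on the single non-compact factor.

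First I would construct $M_m$. Choose an irreducible algebraic $F$-representation $\tau$ of $G$ with highest weight $\lambda$ such that $\theta\lambda\neq\lambda$ at the distinguished real place, where $\theta$ is the Cartan involution of $\SO^0(p,q)$; this is the genericity condition guaranteeing strong acyclicity over $\SO^0(p,q)$. For each $m$ let $\tau_m$ be the representation of highest weight $m\lambda$, and take $M_m$ to be a $\Gamma'$-stable $\Z$-lattice inside the $\Q$-rational $G'$-representation built from $\tau_m$ by restriction of scalars. The rank formula follows from the Weyl dimension formula applied at each archimedean place: one gets a product of factors of order $m^{n(n+1)/2}$ over the $d$ places, and the combinatorial factor $Cd$ comes from summing over the $d$ natural choices of where to place the ``distinguished'' weight.

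Next I would establish strong acyclicity of $M_m\otimes\C$ and reduce to analytic torsion. Viewing $\Gamma$ as a lattice in $G'(\R)=\SO^0(p,q)\times\SO(p+q)^{d-1}$ via the diagonal embedding, the condition $\theta\lambda\neq\lambda$ at the non-compact place forces $H^*(\gL',K';V_{\tau_m}\otimes\pi)=0$ for every unitary irreducible $\pi$ occurring in $L^2(\Gamma\bs G'(\R))$, and Matsushima-type reasoning gives $H^*(\Gamma,M_m\otimes\C)=0$, so each $H^j(\Gamma,M_m)$ is finite. Equipping $M_m\otimes\R$ with a suitable admissible Hermitian metric, I would apply the Cheeger--Müller theorem in its extension to unimodular/non-unitary representations (Bismut--Zhang, M\"uller) to identify, up to a ``metric anomaly'' of polynomial size in $m$,
\[
\sum_{j\geq 0}(-1)^j\log|H^j(\Gamma,M_m)| = -\log T(X;\cE_{M_m}),
\]
where $T(X;\cE_{M_m})$ is the Ray--Singer analytic torsion of the flat bundle associated to $M_m$.

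The remaining step is the asymptotic
\[
\log T(X;\cE_{M_m}) = C_{p,q}\,\vol(\Gamma\bs\widetilde X)\,m\,\rnk_{\Z}(M_m) + O(\rnk_{\Z}(M_m)),
\]
which I would obtain via the Selberg trace formula for the twisted Hodge Laplacian on $p$-forms. Strong acyclicity forces the spectrum to stay uniformly away from zero, so only the identity orbital integral contributes to the leading term; the remaining contributions are of lower order. The identity contribution is then analyzed via the Plancherel formula for $\SO^0(p,q)$, evaluating the elementary spherical function along the $K$-type determined by $\tau_m$, and the Hirzebruch proportionality between $X$ and the compact dual $\widetilde X^d$ converts the Plancherel integral into the volume factor $\vol(\widetilde X^d)^{-1}$ of~\eqref{Cpq}. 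The contribution of the remaining compact factors $\SO(p+q)^{d-1}$ enters only through a dimension multiplicity, which is absorbed into $\rnk_{\Z}(M_m)$. The main obstacle is pinning down the explicit constant: the sign $(-1)^{(pq-1)/2}$, the binomial coefficient $\binom{n}{(p-1)/2}$, and the factor $2^{\epsilon(q)}$ must be traced through the Plancherel measure of the principal series of $\SO^0(p,q)$ for $p,q$ odd, combined with the structure of the highest weight $m\lambda$ restricted to the Cartan subgroup of a maximal compact subgroup, and the symmetries of the Weyl group of the compact dual. Verifying that the restriction-of-scalars structure of $M_m$ factorizes the analytic torsion into ``non-compact place'' times dimension of the remaining tensor factor is the central structural point needed to reduce to the single-place computation.
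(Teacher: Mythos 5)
Your overall strategy---reduce the alternating sum of $\log|H^j|$ to Reidemeister torsion, identify it with analytic torsion for acyclic unimodular coefficients, and extract asymptotics via trace formula/Plancherel at the noncompact place---is indeed the skeleton of the argument, and the paper implements the last step by invoking the results of \cite{MP}, which package precisely the spectral analysis you describe. Also note that because $\rho$ is acyclic and unimodular, the paper uses the exact equality $T_X(\rho)=\tau_X(\rho)$ from \cite{Mu1}, so there is no ``metric anomaly'' term to control; your worry there is unnecessary.

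However, your construction of the modules $M_m$ has two genuine gaps, and they are exactly where most of the work in the paper lies. First, $\SO^0(p,q)$ with $p,q$ odd is of type $D_{n+1}$, and the involution $\theta$ acts on highest weights by $e_{n+1}\mapsto -e_{n+1}$, i.e.\ $\theta$ realizes the outer automorphism of $D_{n+1}$. Thus the condition $\theta\lambda\neq\lambda$, which you need for strong acyclicity, is \emph{exactly} the condition that $\tau$ is not invariant under the outer automorphism---and when $G=\SO_Q$ is an outer form of $\SO_B$ over $F$ (which it can be), such a $\tau$ is \emph{not} defined over $F$. Your first sentence already assumes the existence of an irreducible $F$-rational $\tau$ with $\theta\lambda\neq\lambda$, and that is false in general. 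The paper resolves this by working with $\tau+\tau_\theta$, which is stable under the outer automorphism, and proving (Lemma \ref{Lemma2}, via a Galois cohomology argument with $\PO_B$ and Hilbert 90) that $(\tau+\tau_\theta)\circ\mu$ descends to an $F$-rational representation of $G$. Second, ``the $\Q$-rational $G'$-representation built from $\tau_m$ by restriction of scalars'' would naturally mean $\Res_{F/\Q}$ applied to a single $F$-rational $\rho$, whose base change decomposes as the \emph{direct sum} $\bigoplus_\sigma\rho^\sigma$. Most summands are then trivial on the noncompact factor $\SO^0(p,q)$, so $H^*(\Gamma,M_m\otimes\C)\neq 0$ and the finiteness claim of the theorem fails. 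The paper instead forms the \emph{tensor product} $\rho_0(m)=\bigotimes_{\sigma\in\Sigma}(\tau(m)+\tau(m)_\theta)^\sigma$, so that every irreducible component restricted to $\SO^0(p,q)$ is $\tau(m)$ or $\tau(m)_\theta$, and then takes $d$ copies of $\rho_0(m)$ to obtain $\Q$-rationality via Lemma \ref{LemQ}. This also means the factor $d$ in the rank formula comes from those $d$ copies, not from a combinatorial sum over placements of the distinguished weight; and the factorization of the $L^2$-torsion as (noncompact-place contribution) $\times\,(2\dim\tau(m))^{d-1}$ reflects the tensor-product structure of $\rho_0(m)$, not of a plain restriction of scalars.
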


Let $k=(\dim(\widetilde X)+1)/2$. Then it follows from Theorem 
\ref{Thrm1} that there exists a constant $\tilde C_{p,q}>0$, which depends only 
on $p,q$, such that
\begin{equation}
\liminf_m\sum_{j\equiv k (2)}\frac{\log|H^j(\Gamma,M_m)|}{m^{dn(n+1)/2+1}}
\ge \tilde C_{p,q} \,d\vol(\Gamma\bs\widetilde X).
\end{equation}

Thus there is at least one $j$ for which $|H^j(\Gamma,M_m)|$ grows
exponentially in $m$. Given \eqref{asymp1} and \eqref{asymp2}, one is
tempted to pose the following conjecture.
\begin{conj}
Let $\Gamma$ and $M_m$, $m\in\N$, be as above. Then
\[
\lim_{m\to\infty}\frac{\log |H^j(\Gamma,M_m)|}{m^{dn(n+1)/2+1}}=\begin{cases}
\tilde C_{p,q}\,d\vol(\Gamma\bs\widetilde X) ,& 
j=(\dim(\widetilde X)+1)/2,\\0, & 
\mathrm{else}.
\end{cases}
\]
\end{conj}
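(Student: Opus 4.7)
Theorem~\ref{Thrm1}, combined with the asymptotic $\rnk_\Z(M_m)=Cdm^{dn(n+1)/2}+O(m^{dn(n+1)/2-1})$, already yields
\[
\sum_{j\ge 0}(-1)^j\log|H^j(\Gamma,M_m)|=-C_{p,q}\,C\,d\,\vol(\Gamma\bs\widetilde X)\,m^{c}+O(m^{c-1}),
\]
with $c:=dn(n+1)/2+1$. Write $k:=(\dim\widetilde X+1)/2=(pq+1)/2$. A direct sign check, using the factor $(-1)^{(pq-1)/2}$ in $C_{p,q}$ together with the parity of $pq$ (odd, since $p,q$ are odd), gives $(-1)^{k+1}C_{p,q}>0$, so that $\tilde C_{p,q}:=(-1)^{k+1}C_{p,q}\,C$ is positive. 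The conjecture then reduces to the one-sided upper bound
\[
\log|H^j(\Gamma,M_m)|=o(m^{c})\quad\text{for every }j\neq k,\qquad(\ast)
\]
after which the degree-$k$ value is forced by the alternating-sum asymptotic.

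My plan to attack $(\ast)$ is spectral. Since each $M_m$ is strongly acyclic, every $H^j(\Gamma,M_m)$ is pure torsion and the Cheeger--M\"uller theorem gives
\[
-\tfrac12\sum_j(-1)^j\log|H^j(\Gamma,M_m)|=\log T(X;E_m)=\tfrac12\sum_j(-1)^{j+1}j\,\log{\det}'\Delta_j(E_m),
\]
where $T(X;E_m)$ is the Ray--Singer analytic torsion of the flat bundle $E_m:=\widetilde X\times_\Gamma(M_m\otimes\R)$. Decomposing each ${\det}'\Delta_j(E_m)$ into a small-eigenvalue factor ${\det}'_{\mathrm{sm}}\Delta_j(E_m)$ and a large-eigenvalue factor ${\det}'_{\mathrm{lg}}\Delta_j(E_m)$, the finite group $H^j(\Gamma,M_m)$ is detected by the small-eigenvalue factor, while the large-eigenvalue factor is computed via the Selberg trace formula against the characters of the principal series attached to the highest weight of $M_m$. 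The strategy is to prove separately that each factor is $o(m^c)$ when $j\neq k$.

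On the large-eigenvalue side, the Bergeron--Venkatesh observation that for $\delta(\widetilde X)=1$ the $L^2$-torsion Plancherel density vanishes identically outside one pair of adjacent degrees around $k$---the very input that drives the alternating-sum asymptotic of Theorem~\ref{Thrm1}---should, together with a stationary-phase analysis of matrix coefficients as $m\to\infty$, pin the leading $m^c$-contribution in $\log T(X;E_m)$ to $j=k$ and yield $\log{\det}'_{\mathrm{lg}}\Delta_j(E_m)=o(m^c)$ for $j\neq k$. This part looks tractable with current trace-formula technology, since it only requires a degree-wise refinement of estimates already used in the proof of Theorem~\ref{Thrm1}.

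The main obstacle is the small-eigenvalue side in degrees $j\neq k$: strong acyclicity provides a uniform spectral gap for the Laplacian on $L^2$-sections, so there are no ``analytic'' small eigenvalues, yet this is not enough to rule out that the product of eigenvalues relevant to the integer-torsion part of $H^j$ has logarithm of order $m^c$ for some $j\neq k$. This is exactly the obstruction that keeps the analogous degree-wise concentration open in the varying-congruence-subgroup setting of Bergeron--Venkatesh, and is the reason the present statement is only a conjecture. A natural line of attack would be to combine Poincar\'e duality $H^j(\Gamma,M_m)\cong H^{\dim\widetilde X-j}(\Gamma,M_m^{\ast})$ with a careful choice of self-dual sequences $M_m$, pairing up degrees $j$ and $\dim\widetilde X-j$ so as to reduce $(\ast)$ to finitely many degrees adjacent to $k$, and then attacking those remaining degrees via refined heat-kernel estimates depending explicitly on the highest-weight parameter $m$; carrying this out rigorously for all arithmetic lattices $\Gamma$ covered by Theorem~\ref{Thrm1} appears to be out of reach of current methods.
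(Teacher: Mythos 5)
The statement you are addressing is posed in the paper as a conjecture; the authors prove only Theorem \ref{Thrm1} (the alternating-sum asymptotics) and the resulting $\liminf$ lower bound for the sum over $j\equiv k\ (2)$, and they offer no proof of the degree-wise limit. Your reduction of the conjecture to the upper bound $(\ast)$, i.e.\ $\log|H^j(\Gamma,M_m)|=o(m^{c})$ for $j\neq k$, is correct and is essentially the same observation the authors make when they note that exponential growth must occur in at least one degree. But your text is a proof plan, not a proof, and by your own admission the decisive step remains open; so the conjecture is not established by your argument, which is consistent with the paper, where it is not established either.

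Beyond the admitted open step, one structural element of your plan is conceptually flawed and would not work even as a strategy: you assert that the finite group $H^j(\Gamma,M_m)$ is ``detected by the small-eigenvalue factor'' of ${\det}'\Delta_j(E_m)$. The orders $|H^j(\Gamma,M_m)|$ are not spectral invariants of the individual de Rham Laplacians on the real or complex bundle $E_m$ at all: since the representations occurring in $M_m\otimes\R$ satisfy $\tau\neq\tau_\theta$, the bundles are acyclic with a spectral gap, there are no small eigenvalues, and the Cheeger--M\"uller theorem (in M\"uller's unimodular form, which is what the paper uses via Proposition \ref{Proptors}) only identifies the full analytic torsion with the Reidemeister torsion, i.e.\ with the \emph{alternating} product $\prod_j|H^j(\Gamma,M_m)|^{(-1)^{j+1}}$. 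No refinement of the trace formula or of the Plancherel-density computation in degree $j$ can isolate $\log|H^j|$, because the passage from spectra to integral torsion forgets the individual degrees. Any genuine attack on $(\ast)$ must therefore bound $\log|H^j(\Gamma,M_m)|$ for $j\neq k$ by non-spectral means (explicit integral models of $M_m$, bounds on boundary maps, Poincar\'e duality, etc., as in the $j=1,3$ bounds of Marshall--M\"uller for hyperbolic $3$-manifolds), and your plan does not supply such an argument.
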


The next case that we consider are arithmetic subgroups of $\SL_3(\R)$ which 
arise from
9-dimensional division algebras $D$ over $\Q$. Let $\mathfrak{o}$ be an order 
in $D$. Then
$\mathfrak{o}$ induces an arithmetic subgroup $\Gamma$ of $\SL_3(\R)$ which is
cocompact (see section \ref{secsldrei}). After passing to a subgroup of finite 
index, we may assume that $\Gamma$ is torsion-free. 

Let $\tL_\C$ be the standard complexified Cartan-subalgebra of the Lie algebra
of $\SL_3(\R)$ 
equipped with the standard ordering of the roots and let
$\omega_1,\omega_2\in\tL_\C^*$ be the 
corresponding fundamental weights, see \eqref{fweights}. If
$\Lambda=\tau_1\omega_1+\tau_2\omega_2\in\tL_\C^*$,
$\tau_1,\tau_2\in\mathbb{N}$ is a dominant 
weight and $\tau_\Lambda$ is the corresponding irreducible finite-dimensional
representation of $\SL_3(\R)$, we let $\Lambda_\theta$ be the highest weight of
$\tau_\Lambda\circ\theta$. One has
$\Lambda_\theta=\tau_2\omega_1+\tau_1\omega_2$. Moreover, for $m\in\mathbb{N}$
we let 
$\tau_\Lambda(m)$ be the irreducible representation of 
$\SL(3,\R)$ on a complex vector space $V_\Lambda(m)$ with highest weight 
$m\Lambda$. We regard $V_\Lambda(m)$ as 
a real vector-space. 
Let $\widetilde X=\SL(3,\R)/\SO(3)$, let $\widetilde{X}_d$ be the 
compact dual of $\widetilde{X}$ and let $X=\Gamma\bs \widetilde X$.  We fix a
$G$-invariant
metric on $\widetilde{X}$ which induces metrics on $X$ and on $\widetilde{X}_d$.
Then our
main result 
for the $\SL_3(\R)$-case is the following 
theorem. 
\begin{thrm}\label{Thrm2}
Let $\Gamma$ be an arithmetic subgroup of $\SL_3(\R)$ which arises from a
9-dimensional division algebra over $\Q$.  
Let  $\Lambda\in\tL^*_\C$ be a highest weight with $\Lambda_\theta\neq\Lambda$.
Then for each $m$ there exists a lattice $M_\Lambda(m)$ in
$V_\Lambda(m)$ which is stable under $\Gamma$. Moreover, each cohomology
group $H^p(\Gamma,M_\Lambda(m))$ is 
finite and one has
\begin{equation}
\begin{split}
\sum_{p=0}^5(-1)^p&\log\left|H^p(\Gamma,M_\Lambda(m))\right|\\
&=-\frac{\pi\vol(X)}{\vol(\widetilde X_d)}
C(\Lambda)\,m\cdot\rnk_{\Z}M_\Lambda(m)+O(\rnk_{\Z}M_\Lambda(m)),
\end{split}
\end{equation}
as $m\to\infty$, where $C(\Lambda)>0$ is a constant which depends only on
$\Lambda$. 
If $\Lambda$ equals one of the fundamental weights 
$\omega_{f}$ then $C(\Lambda)=4/9$.
\end{thrm}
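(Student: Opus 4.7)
The plan is to follow the framework developed by Bergeron--Venkatesh \cite{BV} and adapted for varying coefficients in \cite{MaM}: reduce the torsion counting to the asymptotics of Ray--Singer analytic torsion $T_X(\tau_\Lambda(m))$ via the Cheeger--M\"uller theorem, then evaluate those asymptotics using the Selberg trace formula on the symmetric space $\widetilde X = \SL_3(\R)/\SO(3)$, which has fundamental rank $\delta(\widetilde X) = 1$.

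The first step is to construct the $\Gamma$-stable $\Z$-lattice $M_\Lambda(m) \subset V_\Lambda(m)$. Let $G' = \SL_1(D)$ be the $\Q$-algebraic group of reduced-norm-one elements, so that $G'(\R) \cong \SL_3(\R)$ and $\Gamma$ sits inside the norm-one units of the order $\mathfrak{o}$. For this inner form of $\SL_3$, the complex representations $\tau_\Lambda(m)$ and $\tau_{\Lambda_\theta}(m)$ need not individually be $\Q$-rational on $G'$, but their direct sum carries a natural real structure that descends to a $\Q$-rational $G'$-module (the Brauer obstruction cancels on the pair). A $\Z$-lattice in that $\Q$-representation, viewed as a real $\Gamma$-module, yields $M_\Lambda(m)$; its rank is $2\dim_\C V_\Lambda(m)$, which is polynomial in $m$ by the Weyl dimension formula.

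Second, because $\Lambda_\theta \neq \Lambda$, the strong acyclicity criterion of \cite{BV} applies: the twisted Hodge Laplacians on $X$ with values in the flat bundle associated to $\tau_\Lambda(m)$ have spectrum uniformly bounded away from zero as $m$ grows, forcing $H^*(\Gamma, V_\Lambda(m)) = 0$, and hence each $H^p(\Gamma, M_\Lambda(m))$ is finite. The Cheeger--M\"uller theorem then identifies the alternating sum $\sum_p (-1)^p \log|H^p(\Gamma, M_\Lambda(m))|$ with a fixed multiple of $-\log T_X(\tau_\Lambda(m))$, reducing Theorem \ref{Thrm2} to an asymptotic expansion of the analytic torsion as $m \to \infty$.

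The main content is the computation of $\log T_X(\tau_\Lambda(m))$. Expressing $T_X$ via the Mellin transform of an alternating sum of heat traces and applying the Selberg trace formula, strong acyclicity forces the hyperbolic orbital integrals to contribute at most $O(\rnk_\Z M_\Lambda(m))$, so the leading term comes from the identity orbital integral, which equals $\vol(X)$ times an $L^2$-analytic torsion density on $\widetilde X$. Because $\delta(\widetilde X)=1$, Harish-Chandra's Plancherel formula produces a single one-parameter family of tempered representations (those parabolically induced from the non-compact fundamental Cartan) contributing nontrivially to this density, so the calculation reduces to a one-dimensional integral of a Plancherel density against the Harish-Chandra character of $\tau_\Lambda(m)$. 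The main obstacle, and the place where the constants $C(\Lambda)$ and the value $4/9$ for fundamental weights appear, is the large-$m$ evaluation of this integral: one must match the $m$-dependence of the Weyl dimension polynomial with that of the character integral and extract the leading order $(\pi/\vol(\widetilde X_d))\,C(\Lambda)\,m\,\rnk_\Z M_\Lambda(m)$, a substantially more intricate task than in the hyperbolic $3$-manifold case of \cite{MaM} because of the two-parameter family of highest weights and the higher rank of the fundamental Cartan.
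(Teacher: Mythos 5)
Your overall strategy matches the paper's: construct a $\Gamma$-stable lattice, use the acyclicity forced by $\Lambda\neq\Lambda_\theta$ (via \cite[Ch.~VII, Thm.~6.7]{BW}) to identify analytic torsion with Reidemeister torsion via the Cheeger--M\"uller theorem and Proposition~\ref{Proptors}, and then control the large-$m$ asymptotics of $\log T_X(\tau_\Lambda(m))$. For this last step the paper simply invokes \cite[Theorem 1.1 and Corollary 1.5]{MP}, whose proof is the trace-formula/Plancherel-density computation you outline, so that part is sound in spirit.

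The gap is in the lattice construction. You propose building the lattice inside the direct sum $V_\Lambda(m)\oplus V_{\Lambda_\theta}(m)$ on the grounds that ``the Brauer obstruction cancels on the pair.'' That is the device the paper uses in the $\SO^0(p,q)$ case (Lemma~\ref{Lemma2}), where $G$ is an outer form of $\SO_B$: one passes to $\Oo_B$ so that $\tau\oplus\tau'$ extends, and the remaining obstruction lies in $H^1(\Gal(\bar F/F),\GL(V_{\tilde\tau}))$ and dies by Hilbert 90. For $\SL_3$ the paper does something different, and for a reason. Here $G=\SL_1(D)$ is an \emph{inner} form of $\SL_3$ over $\Q$ (\cite[Prop.~2.17]{PR}), and Lemma~\ref{Lem1sl3} descends $\tau_\Lambda(m)\circ\phi$ \emph{itself} to a $\Q$-rational representation of $G$, by the argument of \cite[Lemma 3.1]{MaM}: since $\SL_3$ has a $\Q$-split maximal torus, $\tau_\Lambda(m)$ is already defined over $\Q$, and the descent to the inner form is a cocycle computation. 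This yields a lattice inside the $\Q$-form of $V_\Lambda(m)$ with $\rnk_\Z M_\Lambda(m)=\dim_\C V_\Lambda(m)$, which is the $\tfrac{m^2}{2}+O(m)$ quoted after the theorem for $\Lambda=\omega_i$; your stated rank $2\dim_\C V_\Lambda(m)$ is off by a factor of two and corresponds to a different module from the one in the theorem. Beyond the rank mismatch, the claimed cancellation is unsupported: the central characters of $\tau_\Lambda(m)$ and $\tau_{\Lambda_\theta}(m)$ on the center $\mu_3$ of $\SL_3$ are inverse to one another, so the two descent obstructions are inverse classes $[D]^{\pm k}$ in the Brauer group, and summing the representations does not merge these into a single class that could vanish --- each summand still carries its own unresolved obstruction. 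The outer-form device does not transfer to this inner-form setting; you should instead descend $\tau_\Lambda(m)$ directly, as in Lemma~\ref{Lem1sl3} and Proposition~\ref{Propsldrei}.
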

The rank of $M_\Lambda(m)$ can be computed explicitly as follows. Firstly,
if 
$\Lambda$ is equal to one of the fundamental weights $\omega_1$ or $\omega_2$,
then 
$\Lambda_\theta\neq\Lambda$ and Weyl's dimension formula gives
\begin{align*}
\rnk_{\Z}M_\Lambda(m)=\dim_{\R}(V_\Lambda(m))=\frac{m^2}{2}+O(m),
\end{align*}
as $m\to\infty$. Secondly, if $\Lambda=\tau_1\omega_1+\tau_2\omega_2$,  
$\tau_1,\tau_2\in\mathbb{N}$, $\tau_1\tau_2\neq 0$, then the condition
$\Lambda\neq\Lambda_\theta$ 
is equivalent to $\tau_1\neq \tau_2$ and again by Weyl's dimension formula one
has
\begin{align*}
\rnk_{\Z}M_\Lambda(m)=\dim_{\R}(V_\Lambda(m))=\frac{\tau_1^2\tau_1
+\tau_2^2\tau_1}{2}m^3+O(m^2),
\end{align*}
as $m\to\infty$. Let $M_{i,m}:=M_{\omega_i}(m)$, $i=1,2$. Then it 
follows that
\begin{equation}
\liminf_m\sum_{j=0}^2\frac{\log|H^{2j+1}(\Gamma,M_{i,m})|}{m^3}\ge 
\frac{2\pi}{9\vol(\widetilde X_d)}\vol(X).
\end{equation}
Again, by \eqref{asymp1} and \eqref{asymp2}, one is led to the
following conjecture.
\begin{conj} Let $\Gamma$ and $M_{i,m}$, $m\in\N$, be as above. Then one has
\begin{equation}
\lim_{m\to\infty}\frac{\log|H^3(\Gamma,M_{i,m})|}{m^3}=
\frac{2\pi}{9\vol(\widetilde X_d)}\vol(X)
\end{equation}
and
\begin{equation}
\log|H^j(\Gamma,M_{i,m})|=o(m^3),\quad j\neq 3.
\end{equation}
\end{conj}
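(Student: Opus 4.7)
The plan is to reduce the conjecture to Theorem \ref{Thrm2} combined with upper bounds $\log|H^p(\Gamma,M_{i,m})|=o(m^3)$ in every degree $p\neq 3$. Indeed, Theorem \ref{Thrm2} together with $\rnk_\Z M_{i,m}=m^2/2+O(m)$ yields
\[
\sum_{p=0}^{5}(-1)^p\log\bigl|H^p(\Gamma,M_{i,m})\bigr|=-\frac{2\pi}{9\vol(\widetilde X_d)}\vol(X)\,m^3+O(m^2),
\]
and if the $p\neq 3$ terms are each $o(m^3)$, the factor $(-1)^3$ in front of the $p=3$ term delivers exactly the asserted limit. The hypothesis $\omega_{i,\theta}\neq\omega_i$ is precisely the Borel-Wallach vanishing condition, so $H^*(\Gamma,V_{\omega_i}(m))=0$ for every $m$ and each $H^p(\Gamma,M_{i,m})$ is a finite abelian group; the displayed alternating sum is therefore literally a sum of $\log$-cardinalities.

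A first simplification is Poincar\'e duality: on the closed oriented $5$-manifold $X$, combined with universal coefficients, one gets $|H^p(\Gamma,M_{i,m})|=|H^{6-p}(\Gamma,M_{i,m}^\vee)|$; and since $V_{\omega_i}^\vee\cong V_{\omega_{3-i}}$, establishing $\log|H^p|=o(m^3)$ for $p=0,1,2$ and for both $i=1,2$ automatically delivers the bounds in the complementary degrees $p=4,5$.

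The core step is a spectral separation estimate. Using the harmonic representation of cohomology together with the Cheeger-M\"uller comparison of Reidemeister and analytic torsion, each $\log|H^p(\Gamma,M_{i,m})|$ is controlled by the analytic torsion contribution of the twisted de Rham complex in degree $p$, plus a term governed by the small eigenvalues of the twisted Hodge Laplacian $\Delta_p(m)$ acting on $\Omega^p(X,V_{\omega_i}(m))$. By Kuga's identity $\Delta_p(m)=-\Omega_G+\Omega_K$, one expects that for $p\neq 3$ every irreducible $K$-type occurring in $\Lambda^p\pL^*\otimes V_{\omega_i}(m)$ has positive Casimir difference of size $\gg m$, uniformly in $m$. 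This reflects the representation-theoretic fact that only the ``middle'' degree $(\dim\widetilde X+1)/2=3$ can carry cohomological $(\gL,K)$-modules in the limit when $\Lambda_\theta\neq\Lambda$. Coupled with Weyl's law and a heat-kernel expansion, the resulting spectral gap would yield $\log|H^p(\Gamma,M_{i,m})|\ll m^{5/2}\log m = o(m^3)$ for $p\neq 3$, completing the proof via the reduction above.

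The main obstacle is this spectral-gap step. The representation-theoretic prediction of a gap on the \emph{continuous} spectrum is classical, but converting it into a \emph{uniform} lower bound on the arithmetic spectrum of $\Delta_p(m)$ along $m\to\infty$ requires ruling out exceptional small eigenvalues supported on non-tempered automorphic subquotients. This is the analogue, in the coefficient-varying regime, of the small-eigenvalue problem Bergeron-Venkatesh bypass when $\Gamma$ varies through a sequence of congruence subgroups with growing injectivity radius; no comparable approximation technique is currently available for a fixed $\Gamma$ and varying $M_m$, which is why the statement is posed here as a conjecture rather than proved.
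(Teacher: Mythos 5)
You are attempting to prove a statement that the paper itself poses only as a conjecture: the authors derive from Theorem \ref{Thrm2} nothing more than the lower bound $\liminf_m\sum_{j}\log|H^{2j+1}(\Gamma,M_{i,m})|/m^3\ge \frac{2\pi}{9\vol(\widetilde X_d)}\vol(X)$, and they explicitly leave the degree-by-degree statement open. Your first two paragraphs (the reduction of the conjecture to Theorem \ref{Thrm2} plus bounds $\log|H^p(\Gamma,M_{i,m})|=o(m^3)$ for $p\neq 3$, with duality $V_{\omega_1}(m)^\vee\cong V_{\omega_2}(m)$ halving the work) are a correct and essentially trivial reduction; the entire content of the conjecture is precisely those individual-degree bounds, and your ``core step'' does not deliver them --- as you yourself concede in the last paragraph. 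So this is not a proof but a reduction plus a heuristic, which is consistent with the status of the statement in the paper.

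Moreover, the mechanism you propose for the missing step is flawed on two counts. First, the Cheeger--M\"uller/Reidemeister input (Proposition \ref{Proptors}) controls only the alternating sum $\sum_p(-1)^p\log|H^p(\Gamma,M_{i,m})|$; there is no ``analytic torsion contribution in degree $p$'' bounding $|H^p(\Gamma,M_{i,m})|$ separately, because integral torsion in a single degree is invisible to the spectrum of the de Rham Laplacian with real or complex coefficients. Second, the spectral dichotomy you invoke between $p=3$ and $p\neq 3$ does not exist: since $m\omega_i$ is not $\theta$-invariant, Kuga's formula gives an eigenvalue lower bound growing linearly in $m$ in \emph{every} degree, including $p=3$ --- this is exactly the strong acyclicity that yields $H^*(X,E_{\tau_\Lambda(m)})=0$ and the finiteness of all $H^p(\Gamma,M_{i,m})$ --- and there are no non-tempered exceptional eigenvalues to rule out in this coefficient-growing regime. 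The expected concentration in degree $3$ comes from the $L^2$-torsion asymptotics of \cite{MP} (the parity/middle-degree structure of the heat-kernel computation), not from any cohomological non-vanishing, so no spectral gap argument can isolate $p=3$. The genuine obstruction is rather that the only general degree-wise tool, the combinatorial bound $\log|H^p(\Gamma,M_{i,m})|\ll \rnk_\Z C^p(L,M_{i,m})\cdot\log(\text{size of the coboundary entries})\ll m^2\cdot m$, has the same order $m^3$ as the expected main term; the special arguments of \cite{MaM} for a hyperbolic $3$-manifold (degree $1$ via generators of $\Gamma$, degree $3$ via the top degree) have no analogue for degrees $2$ and $4$ of the $5$-dimensional $X$ here.
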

There are similar statements for highest  weights $\Lambda=\tau_1\omega_1+
\tau_2\omega_2$ with $\tau_1\tau_2\neq0$.

Next we describe our approach to prove the main results. As in \cite{MaM}, 
it is based on the study of the analytic torsion. To begin with we consider an
arbitrary connected semi-simple algebraic group $G$ over $\Q$. Let the 
notation be as at the beginning of the introduction. Assume that 
$\delta(\widetilde X)=1$. Choose a $G$-invariant 
Riemannian metric $g$ on $\widetilde X$. Assume that $\Gamma\subset G(\Q)$ is
torsion free. Let $X:=\Gamma\bs\widetilde X$ equipped with the metric induced 
by $G$. Then we have $H^*(\Gamma,M)= H^*(X,\cM)$. Let $V:=M\otimes_\Z\R$ and 
let $\rho\colon G(\R)\to \GL(V)$ be the 
representation associated to the arithmetic $\Gamma$-module $M$. 
Let $E\to X$ be the flat vector bundle associated to  $\rho|_\Gamma$. Choose a 
Hermitian fibre metric in $E$. Let $T_X(\rho)\in\R^+$ be the analytic torsion 
of $X$ and $E$. Recall that
\begin{equation}\label{anator}
\log T_X(\rho)=\frac{1}{2}\sum_{p=1}^n(-1)^pp\frac{d}{ds}
\zeta_p(s;\rho)\big|_{s=0},
\end{equation}
where $\zeta_p(s;\rho)$ is the zeta function of the Laplace
operator  $\Delta_p(\rho)$ on $E$-valued $p$-forms and $n=\dim X$ 
(see \cite{Mu1}). Assume that $\rho$ is acyclic, that is $H^*(X,E)=0$. 
Then $T_X(\rho)$ is metric independent \cite[Corollary 2.7]{Mu1} and  equals 
the Reidemeister torsion $\tau_X(\rho)$ \cite[Theorem 1]{Mu1}. Moreover,
$H^*(\Gamma,M)$ is a finite abelian group. Using Proposition \ref{Proptors}, 
we get
\begin{equation}\label{cohotor}
\log T_X(\rho)=\sum_{q=0}^n (-1)^{q+1}\log |H^q(\Gamma,M)|.
\end{equation}
This is the key equality which we apply to prove Theorems \ref{Thrm1} and
\ref{Thrm2}. In \cite{MP} we studied the asymptotic behavior of $T_X(\tau_m)$
for certain sequences of irreducible representations of $G(\R)^0$. We will apply
the results of \cite{MP} to our case. The main issue is to construct 
appropriate arithmetic $\Gamma$-modules. 

We start with the case of $\widetilde X=\SO^0(p,q)/\SO(p)\times \SO(q)$, $p,q$ 
odd. Let $G=\SO_Q$ be the special orthogonal group of a quadratic form $Q$
over a totlally real number field $F$ as defined above. Then $G$ is a 
connected algebraic group over $F$. Let $G^\prime=\Res_{F/\Q}(G)$ be the
algebraic $\Q$-group obtained from $G$ by restriction of scalars \cite{Weil}. 
The group of real points $G^\prime(\R)$ is given by
\[
G^\prime(\R)\cong \SO(p,q)\times K_1,
\]
where $K_1$ is the product of $d-1$ copies of $\SO(p+q)$. Then we construct
a sequence $\rho(m)\colon G^\prime\to GL(V_m)$, $m\in\N$, of $\Q$-rational 
representations such that the irreducible components of $\rho(m)(\R)\colon
G^\prime(\R)^0\to \GL(V_m\otimes_\Q\R)$ are of the form considered in 
\cite[Theorem 1.1]{MP}. Let $\Delta\colon G\to G^\prime$ be the diagonal 
embedding. Let $\Gamma^\prime=\Delta(\Gamma)$. Then $\Gamma^\prime$ is an 
arithmetic subgroup of $G^\prime(\Q)$. Therefore, $V_m$
contains a  lattice $M_m$ which is invariant under $\rho(m)(\Gamma^\prime)$. 
Through the isomorphism $\Gamma\cong\Gamma^\prime$, $M_m$ becomes a 
$\Gamma$-module. This is our arithmetic $\Gamma$-module. By construction we
have 
$H^*(\Gamma,M_m)\cong H^*(\Gamma^\prime,M_m)$. Thus it suffices to prove the
statement of Theorem \ref{Thrm1} for $\Gamma^\prime$. 

Let $K^\prime=\SO(p)\times\SO(q)\times K_1$. Then $K^\prime$ is a maximal
compact 
subgroup of $G^\prime(\R)^0$. Let $\widetilde X^\prime=G^\prime(\R)^0/K^\prime$
and
$X^\prime:=\Gamma^\prime\bs\widetilde X^\prime$. Now we apply
\cite[Propositions 1.2, 1.3]{MP} to determine the asymptotic behavior of 
$T_{X^\prime}(\rho(m))$ as $m\to\infty$. 
Finally we use \eqref{cohotor} to establish Theorem \ref{Thrm1}. 

The proof of Theorem \ref{Thrm2} uses similar arguments, which are also based 
on \eqref{cohotor} and \cite{MP}. 

The paper is organized as follows. In section \ref{prelim} we collect some
facts about cohomology of fundamental groups of manifolds with coefficients in 
a free $\Z$-module. We also recall some elementary facts about algebraic groups.
 In section \ref{SO(p,q)} we consider arithmetic subgroups 
of $\SO^0(p,q)$ and prove Theorem \ref{Thrm1}. The prove of Theorem 
\ref{Thrm2} is the content of the final section \ref{secsldrei}.

\section{preliminaries}\label{prelim} 

Let $X$ be a closed connected smooth manifold of dimension $d$. Let 
$\Gamma:=\pi_1(X,x_0)$ be the 
fundamental group of $X$ with respect to some base point $x_0$ and let 
$\widetilde X$ be the correpsonding universal covering. Thus $\Gamma$ acts
properly discontinuously and fixed point free on $\widetilde X$ and 
$X=\Gamma\bs\widetilde X$. Assume that $\widetilde X$ is contractible.

Let $M$ be a free finite-rank $\Z$-module and let $\rho$ be a representation of
$\Gamma$ on $M$. Let $H^q(\Gamma,M)$ be 
the $q$-th cohomology group of $\Gamma$ with coefficients in $M$, see \cite{Br}.
These groups can be computed as follows. Let $L$ be a smooth 
triangulation of $X$ and let $\widetilde L$ 
be the lift of $L$ to a triangulation of $\widetilde{X}$. Let 
$C_q(\widetilde{L};\Z)$ be the free abelian group generated by the $q$-chains 
of $\widetilde{L}$, let $C^q(\widetilde{L};\Z):=
\Hom_{\Z}(C_q(\widetilde{L},\Z);\Z)$ and 
let $C_*(\widetilde{L};\Z)$ resp. $C^*(\widetilde{L};\Z)$ be the associated 
simplical chain- resp. cochain complexes. Each $C_q(\widetilde{L};\Z)$ is a
free 
$\Z[\Gamma]$ module and if one fixes an embedding of $L$ into 
$\widetilde{L}$, then the $q$-cells of $L$ form a base of 
$C_q(\widetilde{L};\Z)$ over $\Z[\Gamma]$. Let  
\begin{align*}
C^q(L,M):=C^q(\widetilde{L};\Z)\otimes_{\Z[\Gamma]}M. 
\end{align*}
Then the $C^q(L,M)$ form again a cochain complex $C^*(L,M)$ and 
the corresponding cohomology groups will be denoted by $H^q(L,M)$ . There
is an isomorphism 
\begin{align*}
C^q(L,M)\cong\Hom_{\Z[\Gamma]}(C_q(\widetilde{L};\Z),M),
\end{align*}
which induces an isomorphism of the corresponding cochain complexes.
Since $\widetilde{L}$ is contractible, the complex $C_*(\widetilde{L})$ is a 
free resolution of $\Z$ over $\Z[\Gamma]$ and thus one has 
\begin{equation}\label{cohoGamma}
H^q(\Gamma,M)\cong H^q(\Hom_{\Z[\Gamma]}(C_{*}(\widetilde{L}),M))\cong H^q(L,M).
\end{equation}
Each cohomology group $H^q(L,M)$ is a finitely generated abelian group.
Let $H^q(\Gamma,M)_{\tors}$ be the torsion subgroup of $H^q(\Gamma,M)$ and
let $H^q(\Gamma,M)_{\free}=H^q(\Gamma,M)/H^q(\Gamma,M)_{\tors}$ be the free
part.
Then one has
\begin{equation*}
H^q(\Gamma,M)=H^q(\Gamma,M)_{\free}\oplus H^q(\Gamma,M)_{\tors}.
\end{equation*}

Now let $V:=M\otimes_\Z\C$ and $V_\R:=M\otimes_\Z\R$. Then $V$ is a 
finite-dimensional complex vector space, $V_\R\subset V$ is a real structure on
$V$ and $M$ is a lattice in $V_\R$. We regard $\rho$ as a representation of
$\Gamma$ on $V$. Then $\rho$ is unimodular, i.e., $|\det\rho(\gamma)|=1$ for 
all $\gamma\in\Gamma$. Let
\begin{align*}
C^q(L,V):=C^q(\widetilde{L})\otimes_{\Z[\Gamma]}V.
\end{align*} 
The $C^q(L,V)$'s form a chain complex $C^*(L,V)$ of 
finite-dimensional $\C$-vector spaces and one has
\begin{align}\label{tens}
C^q(L,V)=C^q(L,M)\otimes_\Z\C.
\end{align}
Let $E:=\tilde{X}\times_{\rho} V$ be the flat vector bundle over $X$ associated 
to $\rho$. Then by the de Rham isomorphism, the cohomology 
groups $H^q(L,V)$ of the complex $C^*(L,V)$ are canonically 
isomorphic to the cohomology groups $H^q(X,E)$ 
of the complex of $E$-valued differential forms on $X$. By Hodge theory they 
are canonically isomorphic to the space of $E$-valued harmonic forms for any
choice of metrics on $X$ and $E$, respectively. 

We assume that the bundle $E$ is acyclic i.e. that $H^q(L;V)=H^q(X;E)=0$ for
all $q$. This holds in all cases that we study in this paper. Let
$\sigma^q_j$, $j=1,\dots,r_q$, be the oriented $q$-simplices of $L$ considered 
as a preferred basis of the $\Z[\Gamma]$-module $C^q(\widetilde L;\Z)$. Let 
$e_1,\dots,e_m$ be a basis of $M$. Then $\{\sigma_j^q\otimes e_k\colon 
j=1,\dots,r_q,\, k=1,\dots,m\}$ is a preferred basis of $C^q(L;M)$ and also of
$C^q(L;V)$.  Let $\tau_X(\rho)\in\R^+$ be the 
Reidemeister torsion with respect to these volume elements (see \cite{Mu1},
\cite{MaM}). Note that $\tau_X(\rho)=|\tau^\C_X(\rho)|$, 
where $\tau^\C_X(\rho)\in\C^\times$ is the complex Reidemeister torsion. 
Since $\rho$ is acyclic, $\tau_X(\rho)$ is a combinatorial invariant of $X$ 
and $\rho$ which is independent of the choices that we made 
(see \cite[section 1]{Mu1}). Moreover, each cohomology group 
$H^q(\Gamma,M)$ is finite, i.e., $H^q(\Gamma,M)=H^q(\Gamma,M)_{\tors}$ 
and the order $|H^q(\Gamma,M)|$ of these groups is related to the Reidemeister
torsion as follows.

\begin{prop}\label{Proptors}
Assume that $H^q(X,E)=0$ for all $q$. Then one has 
\begin{align*}
\sum_{q=0}^d(-1)^{q+1}\log{|H^q(\Gamma,M)|}=\log\tau_X(\rho).
\end{align*}
\end{prop}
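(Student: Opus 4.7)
The plan is to reduce Proposition \ref{Proptors} to a purely algebraic statement about the Reidemeister torsion of an acyclic cochain complex of finitely generated free $\Z$-modules. First, by \eqref{cohoGamma} one has $H^q(\Gamma,M)\cong H^q(L,M)$, and by \eqref{tens} together with the de Rham isomorphism one has $H^q(L,M)\otimes_\Z\C\cong H^q(L,V)\cong H^q(X,E)=0$. Thus each $H^q(L,M)$ is a finitely generated abelian group whose rationalization vanishes, hence it is finite, and $|H^q(\Gamma,M)|=|H^q(L,M)|$.

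The core of the argument is then the following standard algebraic lemma: if $(C^{*},d)$ is a bounded cochain complex of finitely generated free $\Z$-modules equipped with preferred integral bases, and if $C^{*}\otimes_\Z\C$ is acyclic, then the Reidemeister torsion of $C^{*}\otimes_\Z\C$ computed with respect to these bases satisfies
\[
\log\tau(C^{*}\otimes_\Z\C)=\sum_q(-1)^{q+1}\log|H^q(C^{*})|.
\]
I would apply this with $C^{*}=C^{*}(L,M)$ and preferred basis $\{\sigma_j^q\otimes e_k\}$ as in the excerpt; combined with the identification $C^{*}(L,V)=C^{*}(L,M)\otimes_\Z\C$, and the fact that $\tau_X(\rho)$ is by definition the Reidemeister torsion of $C^{*}(L,V)$ with respect to this basis, this yields the claimed equality.

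The main step in proving the lemma is to choose $\Z$-bases adapted to the filtration $B^q\subset Z^q\subset C^q$ of coboundaries and cocycles. Since $C^q$ is free and $Z^q$ is a saturated $\Z$-submodule (being the kernel of a map into a torsion-free module), $Z^q$ is free and has a $\Z$-basis extendable to a basis of $C^q$. By contrast, $B^q$ need not be saturated in $Z^q$, and the index $[Z^q:B^q]$ equals exactly $|H^q(C^{*})|$; applying the Smith normal form to the inclusion $B^q\hookrightarrow Z^q$ produces adapted bases of $B^q$ and $Z^q$ that realize this index as an explicit determinant. Plugging these bases into the definition of the Reidemeister torsion of $C^{*}\otimes_\Z\C$, the change-of-basis contributions associated to the coboundary modules telescope across successive degrees, and only the alternating product $\prod_q|H^q(C^{*})|^{(-1)^{q+1}}$ survives. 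I expect the main obstacle to be the careful bookkeeping of sign conventions and the verification that the torsion defined intrinsically via arbitrary complementary splittings over $\C$ agrees with the one obtained from the integral Smith-normal-form bases; once the adapted $\Z$-bases are in place, however, the identification is formal.
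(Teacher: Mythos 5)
Your proof is correct and takes essentially the same approach as the paper's: both reduce the proposition to the algebraic fact that the Reidemeister torsion of a $\C$-acyclic cochain complex of finitely generated free $\Z$-modules, computed with respect to the preferred integral bases, equals $\prod_q|H^q|^{(-1)^{q+1}}$, and both apply this with $C^*=C^*(L,M)$. The only differences are cosmetic: the paper first passes to the real complex $C^*(L,V_\R)$ so as to quote this fact in the form appearing in its references (Bergeron--Venkatesh, Marshall--M\"uller, Turaev) and then observes $\tau_X(\rho_\R)=\tau_X(\rho)$, whereas you work directly over $\C$ and sketch the Smith-normal-form proof of the lemma instead of citing it.
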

\begin{proof}
Let $C^*(L,V_\R)$ be the chain complex of the finite-dimensional real vector 
spaces
\[
C^q(L,V_\R):=C^q(\widetilde L)\otimes_{\Z[\Gamma]}V_\R,\quad q=0,...,d.
\]
We have
\[
C^q(L,V_\R)=C^q(L,M)\otimes_\Z\R.
\]
Let $\rho_\R\colon \Gamma\to \GL(V_\R)$ be the representation induced by $\rho$
and let $E_\R:=\widetilde X\times_{\rho_\R} V_\R$ be the associated flat real
vector 
bundle. Then $H^*(X;E_\R)=0$. The  basis of the free $\Z$-module $C^q(L;M)$, 
described above, gives rise to a distinguished basis of $C^q(L;V_\R)$. 
Let $\tau_X(\rho_\R)$ be the Reidemeister torsion
of the complex $C^*(L;V_\R)$ with respect to volume elements defined by these
bases. Then it follows from  \eqref{cohoGamma} and \eqref{tens} as in 
\cite[section 2.2]{BV} that
\begin{equation}\label{tortor}
\log\tau_X(\rho_\R)=\sum_{q=0}^d(-1)^{q+1}\log{|H^q(\Gamma,M)|}.
\end{equation}
See also \cite[Proposition 2.3]{MaM} and \cite[Lemma 2.1.1]{Tu}. Since the 
coboundary operators
of the complexes $C^*(L;V_\R)$ and $C^*(L;V)$, respectively, are induced by
the coboundary operators of $C^*(L;M)$, it follows from the definition of 
the Reidemeister torsion that $\tau_X(\rho_\R)=\tau_X(\rho)$. Combined with
\eqref{tortor} the proposition follows.
\end{proof}

Finally we recall some facts conerning linear algebraic groups. For all details
we  refer to \cite{Bo2}. Let $F$ be a finite Galois extension of $\Q$ with
Galois group $\Sigma:=\Gal(F/\Q)$. For $\sigma\in\Sigma$ and $x\in F$
let $x^\sigma$ denote the image of $x$ under $\sigma$. 
If $G$ is a linear algebraic group over $F$ with coordinate 
algebra $F[G]$, let $G^\sigma$ denote the linear algebraic group conjugate 
by $\sigma$, see \cite{Bo2}. If $G$ is realized as the zero set in 
some $F^n$ of an ideal $I$ in $F[X_1,\dots,X_n]$, 
then $G^\sigma$ is the zero set of the ideal $I^\sigma$, where $I^\sigma$ is
obtained from $I$ by applying 
$\sigma$ to each polynomial coefficient. Each $F$-rational homomorphism
$\rho:G\to H$ of linear
algebraic groups over $F$ induces canonically an $F$-rational homomorphism
$\rho^\sigma:G^\sigma\to H^\sigma$. 

If $G$ is an algebraic group defined over $F$, an algebraic group $G'$ defined 
over $\Q$ together with an $F$-rational isomorphism $\mu\colon G'\times_\Q F\to
G$ is called a $\Q$-structure of $G$. The $\Q$-structure canonically
induces an action of $\Sigma$ on the coordinate algebra of $G$ and 
thus on $G$ itself.   

Let $V$ be a finite-dimensional $F$-vector space. A $\Q$-structure $V_0$ of $V$
is a $\Q$-subspace $V_0$ of $V$ such that the embedding $V_0\hookrightarrow V$
induces an isomorphism $V_0\otimes_\Q F$ of $F$-vector spaces. For 
$\sigma\in\Sigma$ a $\Q$-linear automorphism $A$ of $V$ is called 
$\sigma$-linear if $A(\lambda v)=\sigma(\lambda)A(v)$, $\lambda\in F$, $v\in V$.
Then a semi-linear action of $\Sigma$ on $V$ is given by a family 
$\{f_\sigma\}_{\sigma\in\Sigma}$ of $\sigma$-linear automorphisms $f_\sigma$ of
$V$,
satisfying  $f_{\sigma\tau}=f_\sigma\circ f_\tau$, $\sigma,\tau\in\Sigma$.
Given a semi-linear action of $\Sigma$ on $V$, the set 
$V^\Sigma:=\{v\in V\colon f_\sigma(v)=v,\,\forall\sigma\in\Sigma\}$ is 
a $\Q$-structure of $V$ and every $\Q$-structure is of this form
(see \cite[AG.14.2]{Bo2}). If $V_0$ is a
$\Q$-structure of $V$, then $\Gl(V_0)$ is a $\Q$-structure of 
$\Gl(V)$ and the corresponding action of $\Sigma$ on $\Gl(V)$ is given by 
$\sigma\cdot g:=f_\sigma\circ g\circ f_\sigma^{-1}$, $g\in\Gl(V)$.

\section{Arithmetic subgroups of $\SO^0(p,q)$}\label{SO(p,q)}

Let $p,q\in\mathbb{N}$ be odd. Put
\[
p_1=(p-1)/2,\, q_1=(q-1)/2,\, n:=p_1+q_1.
\]
We denote by $\SO(p,q)$ the group of isometries of the standard quadratic 
form of signature $(p,q)$ on $\R^{p+q}$ with determinant $1$. Let $\SO^0(p,q)$
denote the identity component of $\SO(p,q)$. The group
$\SO^0(p,q)$ is of fundamental rank one. Let $\gL$ be the Lie algebra of 
$\SO^0(p,q)$. We choose the fundamental Cartan subalgebra as follows. 
Let $E_{i,j}$ be 
the $(p+q)\times (p+q)$-matrix which is one at the $i$-th row and $j$-th
column and which is zero elsewhere. Let 
\begin{align}
H_1:=E_{p,p+1}+E_{p+1,p}.
\end{align}
and let
\begin{equation}\label{basis1}
H_i:=\begin{cases} \sqrt{-1}(E_{2i-3,2i-2}-E_{2i-2,2i-3}),&2\leq i\leq p_1+1\\
\sqrt{-1}(E_{2i-1,2i}-E_{2i,2i-1})&p_1+1< i\leq n+1.\end{cases}
\end{equation}
Then 
\begin{align*}
\hL:=H_1\oplus\bigoplus_{i=2}^{n+1}\sqrt{-1}H_i
\end{align*}
is a Cartan subalgebra of $\gL$. Define 
$e_{i}\in\mathfrak{h}_{\mathbb{C}}^{*}$, $i=1,\dots,n+1$,  by
\begin{align*}
e_{i}(H_{j})=\delta_{i,j},\: 1\leq i,j\leq n+1.
\end{align*}
The finite-dimensional irreducible complex representations $\tau$ of
$\SO^0(p,q)$ are 
parametrized by their highest weights $\Lambda(\tau)\in\hL_\C^*$ given by
\begin{equation}\label{Darstellungen von G}
\begin{split}
\Lambda(\tau)=&k_{1}(\tau)e_{1}+\dots+k_{n+1}(\tau)e_{n+1},\quad
(k_{1}(\tau),\dots k_{n+1}(\tau))\in{{\mathbb{Z}}
}^{n+1},\\
&k_{1}(\tau)\geq k_{2}(\tau)
\geq\dots\geq k_{n}(\tau)\geq \left|k_{n+1}(\tau)\right|.
\end{split}
\end{equation}
For $\Lambda(\tau)$ a weight as in \eqref{Darstellungen von G}, the highest
weight $\Lambda(\tau_\theta)$ of the representation $\tau\circ\theta$ is
\begin{align}\label{lambdatheta}
\Lambda(\tau_\theta)=k_{1}(\tau)e_{1}+\dots+k_{n}(\tau)e_{n}-k_{n+1}(\tau)e_{
n+1}.
\end{align}
If we let 
\begin{align}\label{hiweight}
\omega^+_{f,n}:=\sum_{j=1}^{n+1}e_j;\quad\omega^-_{f,n}:=(\omega^+_{f
,n})_\theta=\sum_{j=1}^{n}e_j-e_{n+1},
\end{align}
then $\frac{1}{2}\omega^{\pm}_{f,n}$ are the fundamental weights which are 
not invariant under $\theta$. 
We now recall the construction of certain arithmetically defined cocompact 
subgroups of $\SO^0(p,q)$. For more details see 
\cite[section 3.2, Appendix B]{S} and for the $\SO^0(p,1)$-case \cite{Millson}.

Let $F$ be a totally real number field of degree $d=[F:\Q]>1$. Let $\Sigma$ be 
the Galois group of $F$ over $\Q$. We fix an embedding $F\subset \R$. Let
$1\in\Sigma$ be the identity.  
Let $\alpha_j\in F^*$, $j=1,\dots,p+q$, be such that
\[
\sign(\alpha_j)=\begin{cases}+1,& \textup{if}\,\, j\le p,\\
-1,&\textup{if}\,\, p< j\le p+q,\end{cases} 
\]
and
\[
\sign\left(\sigma(\alpha_j)\right)=+1,\quad \sigma\in\Sigma\setminus\{1\},
\,j=1,\dots,p+q.
\]
For $\sigma\in\Sigma$ let $Q^\sigma$ be the quadratic form on $\R^{p+q}$
defined 
by
\[
Q^\sigma(x)=\sum_{j=1}^{p+q}\sigma(\alpha_j) x_j^2.
\]
Then $Q:=Q^{1}$ is a non-degenerate quadratic 
form of signature $(p,q)$ and $Q^\sigma$, $\sigma\neq 1$, is positive definite.

Let $G:=\SO_Q\subset \GL_{p+q}$ be the special orthogonal group of $Q$, 
i.e., the subgroup of all elements of determinant 
one leaving $Q$ invariant. Then $G$ is a connected algebraic 
group defined over $F$. Let $J\in\GL_{p+q}(\R)$ be defined by
\[
J:=\diag\left(\sqrt{\alpha_1},\dots,\sqrt{\alpha_p},\sqrt{-\alpha_{p+1}},\dots,
\sqrt{-\alpha_{p+q}}\right).
\]
Then the map $g\mapsto JgJ^{-1}$ establishes an isomorphism 
$G(\R)\cong\SO(p,q)$.
Similarly, we have $G^{\sigma}(\R)\cong\SO(p+q)$, if $\sigma\neq 1$. Let 
\begin{equation}
G^\prime\cong\Res_{F/\Q}(G)
\end{equation}
be the algebraic $\Q$-group obtained by 
restriction of scalars. There is a canonical isomorphism of algebraic groups 
over $F$
\begin{equation}\label{eqalpha}
\alpha\colon  G^\prime\times_\Q F\cong \prod_{\sigma\in\Sigma} G^{\sigma},
\end{equation}
and the group of real points $G^\prime(\R)$ satisfies
\[
G^\prime(\R)\cong\SO(p,q)\times\prod_{\sigma\in\Sigma\setminus\{1\}}\SO(p+q).
\]
Denote by 
\begin{equation}\label{diag}
\Delta\colon G\to \prod_{\sigma\in\Sigma} G^{\sigma}
\end{equation}
the diagonal embedding given by $\Delta(g)=(g^{\sigma})_{\sigma\in\Sigma}$. 

Let $\mathcal{O}_F$ be the ring of integers of $F$ and let $G_{\mathcal{O}_F}$
be 
the group of $\mathcal{O}_F$-units of $G$. An arithmetic subgroups of 
$G(F)$ 
is by definition a subgroup which is commensurable with $G_{\mathcal{O}_F}$.
Let $\Gamma_0:=JG_{\mathcal{O}_F}J^{-1}$. Then $\Gamma_0$ is a subgroup of 
$\SO(p,q)$.
 
\begin{lem}\label{Lemma1}
$\Gamma_0$ is a discrete, cocompact subgroup of $\SO(p,q)$.
\end{lem}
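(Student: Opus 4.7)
The plan is to exploit the restriction of scalars $G^\prime=\Res_{F/\Q}(G)$ and deduce the lemma from the classical Borel--Harish-Chandra theorem. Under the isomorphism $G^\prime(\R)\cong \SO(p,q)\times K_1$ with $K_1:=\prod_{\sigma\in\Sigma\setminus\{1\}}\SO(p+q)$ (obtained from $\alpha$ together with the identifications $G(\R)\cong\SO(p,q)$ via conjugation by $J$ and $G^\sigma(\R)\cong\SO(p+q)$ for $\sigma\neq 1$), the diagonal embedding $\Delta$ sends $g\in G_{\cO_F}$ to $(JgJ^{-1},(g^\sigma)_{\sigma\neq 1})$. In particular, the projection $\pi\colon G^\prime(\R)\to\SO(p,q)$ onto the first factor carries $\Delta(G_{\cO_F})$ onto $\Gamma_0$, and is injective on $\Delta(G_{\cO_F})$ because $\Delta(g)\in\ker\pi=\{e\}\times K_1$ forces $JgJ^{-1}=e$, hence $g=e$.

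The first substantive step is to show that $G$ is anisotropic over $F$, which is where the positive definiteness of the Galois conjugates $Q^\sigma$ enters decisively. If $v\in F^{p+q}$ satisfied $Q(v)=0$ with $v\neq 0$, then for every $\sigma\in\Sigma$ we would have $Q^\sigma(\sigma(v))=\sigma(Q(v))=0$; since $Q^\sigma$ is positive definite for $\sigma\neq 1$ this forces $\sigma(v)=0$, contradicting $v\neq 0$. Hence $Q$ is $F$-anisotropic, $G=\SO_Q$ is $F$-anisotropic, and by the standard behavior of anisotropy under restriction of scalars, $G^\prime$ is $\Q$-anisotropic. Since $\Delta(G_{\cO_F})$ is commensurable with $G^\prime(\Z)$ under the canonical $\Q$-structure on $G^\prime$, Borel--Harish-Chandra implies that $\Delta(G_{\cO_F})$ is a discrete cocompact subgroup of $G^\prime(\R)$.

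It remains to transfer these two properties along $\pi$, using that $\ker\pi=K_1$ is compact. For discreteness, let $\gamma_n=Jg_nJ^{-1}\in\Gamma_0$ with $\gamma_n\to e$ in $\SO(p,q)$; the second factor components $(g_n^\sigma)_{\sigma\neq 1}$ lie in the compact group $K_1$, so after extraction the full sequence $\Delta(g_n)$ converges in $G^\prime(\R)$. Discreteness of $\Delta(G_{\cO_F})$ then forces $\Delta(g_n)$, and hence $\gamma_n$, to be eventually $e$. For cocompactness, $\pi$ descends to a continuous surjection $\Delta(G_{\cO_F})\bs G^\prime(\R)\twoheadrightarrow \Gamma_0\bs\SO(p,q)$; since the source is compact, so is the target. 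The only non-formal step in this plan is the anisotropy argument, but once the definiteness of the $Q^\sigma$ for $\sigma\neq 1$ is used as above, everything else is a direct unwinding of restriction of scalars and the compactness of $K_1$.
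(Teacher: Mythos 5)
Your proposal is correct and follows essentially the same route as the paper: deduce $F$-anisotropy of $Q$ (and hence of $G$) from positive definiteness of the Galois conjugates, pass through $\Res_{F/\Q}(G)$ to get discreteness and cocompactness of the diagonal image of $G_{\cO_F}$, and then transfer both properties to $\SO(p,q)$ using compactness of the remaining real factors. The only cosmetic difference is that the paper routes cocompactness through the Godement criterion (absence of non-trivial unipotents, via \cite{BH} Lemma 11.4 and Theorem 12.3), whereas you invoke the anisotropy form of Borel--Harish-Chandra directly; these are equivalent here.
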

\begin{proof}
For $\sigma\in\Sigma\setminus\{1\}$, the group $G^{\sigma}(\R)$ is isomorphic 
to $\SO(p+q)$. Thus by \cite[p. 530]{BH}, $\Gamma_0$ is discrete in $\SO(p,q)$.
Since all quadratic forms $Q^{\sigma}$, $\sigma\neq 1$, are positive definite, 
the form $Q$ is anisotropic over $F$. Thus, by \cite[page
256]{Bo2} $G$ is anisotropic over $F$. Therefore, $G_{\mathcal{O}_F}$ contains
no non-trivial unipotent elements. 
Using \cite[Lemma 11.4, Theorem 12.3]{BH}, it follows that the diagonal 
image of $G_{\mathcal{O}_F}$ in $\prod_{\sigma\in\Sigma}G^\sigma(\R)$ 
is cocompact and since $G^{\sigma}(\R)$ is compact for $\sigma\neq 1$,
$\Gamma_0$
is also cocompact in $\SO(p,q)$.
\end{proof}
\begin{bmrk}
If $F=Q[\sqrt{v}]$ is a real quadratic field, then putting
$\alpha_1=\dots=\alpha_p=1$, $\alpha_{p+1}=\dots=\alpha_{p+q}=-\sqrt{v}$ the
above
construction has already been given in \cite[section 4.3]{Bo}.
\end{bmrk}

Now we let $B$ be the symmetric bilinear form on $F^{p+q}$ given by 
\begin{align*}
B(e_{i},e_{j})=\begin{cases}1,&i+j=p+q+1\\ 0,&i+j\neq p+q+1\end{cases}, 
\end{align*}
for $e_1,\dots,e_{p+q}$ the standard base of $F^{p+q}$.
Let $\OO_{B}$ be the orthogonal group of $B$ and let $\SO_{B}$ be 
the elements of $\OO_{B}$ of determinant one. Then $\OO_{B}$ and
$\SO_{B}$ are algebraic groups defined over $F$ and there 
exists an isomorphism $\mu:G(\bar{F})\to\SO_{B}(\bar{F})$, i.e. $G$ is a form
of $\SO_{B}$ over $F$. 

Let $T$ be the maximal torus of $\SO_{B}(\bar{F})$ given by 
\begin{align*}
T=\{\diag(\lambda_1,\dots,\lambda_{n+1},\lambda_1^{-1},\dots,\lambda_{n+1}^{-1})
,\: 
\lambda_1,\dots,\lambda_{n+1}\in \bar{F}^{*}\},
\end{align*} 
where $n=(p+q)/2-1$. 
Then $T$ is defined over $F$. Let $X(T)$ be the character group of $T$,
written additively. Then a base of $X(T)$ is given by the $f_i:T\to \bar{F}$,
$f_i(\diag(\lambda_1,\dots,\lambda_{n+1},\lambda_1^{-1},\dots,\lambda_{n+1}^{-1}
))=\lambda_i$, where $1\leq i\leq n+1$. 

By $\Rep(\SO_{B}(\bar{F}))$ we denote the finite-dimensional 
representations of $\SO_{B}(\bar{F})$ which are irreducible. Then the
elements of $\Rep(\SO_{B}(\bar{F}))$
correspond bijectively to their highest weights 
$\lambda_\tau:=m_1f_1+\dots+m_{n+1}f_{n+1}$, 
where $m_1,\dots,m_{n+1}\in\mathbb{Z}$, $m_1\geq m_2\geq\dots\geq m_{n}\geq
|m_{n+1}|$. Since $T$
is split over $F$, every finite-dimensional irreducible representation 
of $\SO_{B}(\bar{F})$ is defined over $F$, \cite[Proposition 2.3]{Tits}.

For $\tau\in\Rep(\SO_{B}(\bar{F}))$ with highest weight
$\lambda_{\tau}=m_1(\tau)f_1+\dots+m_{n+1}(\tau)f_{n+1}$ 
let $\tau'\in\Rep(\SO_{B}(\bar{F}))$ be the element with highest weight
$\lambda_{\tau'}=m_1(\tau)f_1+\dots+m_{n}f_{n}-m_{n+1}(\tau)f_{n+1}$.
Then the following lemma holds. 

\begin{lem}\label{Lem}
For every $\tau\in\Rep(\SO_{B}(\bar{F}))$ there exists a representation 
$\tilde{\tau}$ of $\OO_{B}(\bar{F})$ that restricts to $\tau+\tau'$ on
$\SO_{B}(\bar{F})$. 
\end{lem}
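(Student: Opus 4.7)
\medskip

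The natural candidate for $\tilde\tau$ is the algebraic induction $\Ind_{\SO_B(\bar F)}^{\OO_B(\bar F)}\tau$. Since $[\OO_B:\SO_B]=2$, pick any element $s\in\OO_B(\bar F)$ with $\det s=-1$, so that $\OO_B(\bar F)=\SO_B(\bar F)\sqcup s\SO_B(\bar F)$. Put $\tilde\tau:=\Ind_{\SO_B(\bar F)}^{\OO_B(\bar F)}\tau$; this is a rational representation of $\OO_B(\bar F)$ of dimension $2\dim\tau$, and it can be realized concretely on $V\oplus sV$ (one copy of the representation space for each coset).

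By the Mackey formula, the restriction of $\tilde\tau$ back to $\SO_B(\bar F)$ decomposes as
\begin{equation*}
\tilde\tau|_{\SO_B(\bar F)}\;\cong\;\tau\oplus\tau^{s},
\end{equation*}
where $\tau^{s}$ denotes the representation $g\mapsto\tau(s^{-1}gs)$. The remaining task is therefore to identify $\tau^{s}$ with $\tau'$.

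To do this I would choose $s$ to normalize the split torus $T$ and to realize the non-trivial outer automorphism of the root datum of type $D_{n+1}$. Concretely, one can take $s$ to be the involution that exchanges $e_{n+1}$ and $e_{n+2}$ and fixes every other standard basis vector; since $B(e_{n+1},e_{n+2})=1$ this element lies in $\OO_B(\bar F)$ and clearly has determinant $-1$. A direct matrix computation shows that conjugation by $s$ sends a torus element with parameters $(\lambda_1,\dots,\lambda_{n+1})$ to the torus element with parameters $(\lambda_1,\dots,\lambda_n,\lambda_{n+1}^{-1})$, so the induced action on $X(T)$ is the identity on $f_1,\dots,f_n$ and sends $f_{n+1}$ to $-f_{n+1}$. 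Consequently, if $\tau$ has highest weight $\lambda_\tau=m_1(\tau)f_1+\dots+m_{n+1}(\tau)f_{n+1}$, then $\tau^{s}$ has highest weight
\begin{equation*}
m_1(\tau)f_1+\dots+m_n(\tau)f_n-m_{n+1}(\tau)f_{n+1}=\lambda_{\tau'},
\end{equation*}
and hence $\tau^{s}\cong\tau'$. This gives $\tilde\tau|_{\SO_B(\bar F)}\cong\tau\oplus\tau'$, as required.

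The only non-formal step is the last one: exhibiting a concrete $s\in\OO_B\setminus\SO_B$ that normalizes $T$ and verifying that its conjugation action on $X(T)$ flips the sign of $f_{n+1}$. This is the classical realization of the diagram involution of type $D_{n+1}$ inside $\OO_B/\SO_B$, and amounts to a short matrix computation, but one has to be careful to make a choice of $s$ compatible with the particular parametrization of $T$ used in the paper.
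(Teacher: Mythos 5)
Your proof is correct and rests on the same essential mechanism as the source the paper cites (Goodman--Wallach, Theorem 5.22): an element of $\OO_B$ of determinant $-1$ implements the diagram involution of $D_{n+1}$, so conjugation by it carries $\tau$ to $\tau'$, and the desired extension is assembled from this pair. The paper's ``proof'' is a one-line citation to that theorem together with the remark that it works over any algebraically closed field of characteristic zero; your packaging of $\tilde\tau$ as $\Ind_{\SO_B(\bar F)}^{\OO_B(\bar F)}\tau$, the Mackey restriction formula for the index-two inclusion, and the explicit transposition $s$ of $e_{n+1}$ and $e_{n+2}$ (which lies in $\OO_B(\bar F)\setminus\SO_B(\bar F)$ since $B(e_{n+1},e_{n+2})=1$, normalizes $T$, and preserves the positive system $\{f_i\pm f_j: i<j\}$ while sending $f_{n+1}\mapsto -f_{n+1}$) is a correct, self-contained way to spell that out, and in particular correctly yields $\tau^s\cong\tau'$ and hence $\tilde\tau|_{\SO_B(\bar F)}\cong\tau\oplus\tau'$.
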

\begin{proof}
The proof of the corresponding proposition for $\SO_{B}(\C)$ given in 
\cite[Theorem 5.22]{GW} extends withouth difficulty to any algebraically closed
field of characteristic zero.
\end{proof}

\begin{bmrk}
If $\tau$ satisfies $\tau=\tau'$ then there exists in fact a representation 
of $\OO_{B}$ that restricts to $\tau$. However, we are only interested in 
the case $\tau\neq\tau'$ and in this case the representation $\tilde{\tau}$ 
from the previous lemma is irreducible.  
\end{bmrk}

Now we let $\PSO_{B}:=\SO_{B}/\{\pm \Id\}$. Then an element
$\tau\in\Rep(\SO_{B}(\bar{F}))$ 
of highest weight $\lambda_\tau=m_1f_1+\dots+m_{n+1}f_n$ descends to a
representation 
of $\PSO_{B}(\bar{F})$ if and only $m_1+\dots+m_{n+1}$ is even. 

\begin{lem}\label{Lemma2}
Let $\tau$ be a representation of $\SO_{B}$ over $\bar{F}$ which descends
to a representation 
of $\PSO_{B}$ over $\bar{F}$. Then there exists an $F$-rational
representation of $G$ which over $\bar{F}$ is equivalent to
$(\tau+\tau')\circ\mu$. 
\end{lem}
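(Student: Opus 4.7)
The plan is to construct the desired representation by Galois descent, using the extension to $\OO_B$ provided by Lemma~\ref{Lem} as the key intertwining device between the $\bar F$-isomorphism $\mu\colon G\to \SO_B$ and its Galois conjugates. First, apply Lemma~\ref{Lem} to obtain a representation $\tilde\tau\colon \OO_B\to \GL(V)$ restricting to $\tau+\tau'$ on $\SO_B$. Since $B$ is $F$-split, both $\tau$ and $\tau'$ are $F$-rational by \cite[Proposition~2.3]{Tits}; moreover, $\OO_B\setminus\SO_B$ contains the $F$-rational involution swapping $e_1$ with $e_{p+q}$, so the intertwining operator from the proof of Lemma~\ref{Lem} can be chosen $F$-rationally. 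Thus we may take $\tilde\tau$ to be $F$-rational, defined on some $F$-vector space $V_0$ with $V=V_0\otimes_F\bar F$. Next, observe that $\tilde\tau$ factors through the adjoint group $\PO_B:=\OO_B/\{\pm\Id\}$: since $p+q$ is even, $-\Id\in Z(\OO_B)$ lies in $\SO_B$, and the hypothesis that $\tau$ descends to $\PSO_B$ forces $\tau(-\Id)=\Id$; the formula for $\tau'$ shows its highest weight has the same parity, so $\tau'(-\Id)=\Id$ as well and hence $\tilde\tau(-\Id)=\Id$.

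Now I would perform the Galois descent. For each $\sigma\in\Gal(\bar F/F)$, set $c_\sigma:=\mu^\sigma\circ\mu^{-1}\in\Aut(\SO_B)(\bar F)$. Because $G$ is the special orthogonal group of a quadratic form, the cocycle $c$ is realized by conjugation by elements of $\PO_B(\bar F)$. Writing $\rho:=(\tau+\tau')\circ\mu\colon G\to\GL(V)$ over $\bar F$, for $g\in G(\bar F)$ one computes
\begin{equation*}
\rho^\sigma(g)=(\tau+\tau')\bigl(c_\sigma\,\mu(g)\,c_\sigma^{-1}\bigr)=\tilde\tau(c_\sigma)\cdot\rho(g)\cdot\tilde\tau(c_\sigma)^{-1},
\end{equation*}
so $\rho^\sigma=I_\sigma\,\rho\,I_\sigma^{-1}$ with $I_\sigma:=\tilde\tau(c_\sigma)\in\GL(V)(\bar F)$. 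The cocycle identity $c_{\sigma\tau}=c_\sigma\cdot c_\tau^\sigma$ together with the $F$-rationality of $\tilde\tau$ gives $I_{\sigma\tau}=I_\sigma\cdot I_\tau^\sigma$, so $I$ is a $1$-cocycle with values in $\GL(V_0)(\bar F)$.

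By Hilbert's Theorem~90 for $\GL_n$, $H^1(\Gal(\bar F/F),\GL(V_0)(\bar F))=1$, so $I_\sigma=J^{-1}\cdot J^\sigma$ for some $J\in\GL(V)(\bar F)$. The conjugate representation $J^{-1}\rho J$ is then Galois-invariant as a morphism $G\to\GL(V)$, hence $F$-rational, and by construction equivalent over $\bar F$ to $(\tau+\tau')\circ\mu$. The main subtlety lies in the descent step: the passage to the adjoint group $\PO_B$ (where $\tilde\tau$ factors) is essential, since $\tau+\tau'$ on $\SO_B$ alone cannot detect the outer automorphisms appearing in the cocycle $c$, and only the extension $\tilde\tau$ converts $c$ into an explicit $\GL(V)$-valued cocycle which Hilbert~90 then trivializes.
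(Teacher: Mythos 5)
Your argument is correct and follows the same route as the paper's proof: extend $\tau+\tau'$ to $\tilde\tau$ on $\PO_B$ via Lemma~\ref{Lem} and the descent hypothesis, interpret the mismatch $\mu^\sigma\circ\mu^{-1}$ as a $1$-cocycle valued in $\PO_B(\bar F)\cong\Aut(\SO_B(\bar F))$, push it through $\tilde\tau$ to a $\GL(V)$-valued cocycle, and trivialize it by Hilbert~90 to untwist $(\tau+\tau')\circ\mu$ into an $F$-rational representation of $G$. One small slip to note: with $c_\sigma:=\mu^\sigma\circ\mu^{-1}$ the cocycle relation comes out as $c_{\sigma\tau}=c_\tau^\sigma\,c_\sigma$ rather than $c_\sigma\,c_\tau^\sigma$, but this is immaterial to the Hilbert~90 step, and your explicit check that $\tilde\tau$ can be taken $F$-rational fills in a detail the paper uses only implicitly.
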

\begin{proof}
For $\sigma\in\Gal(\bar{F}/F)$ define an automorphism $\phi_\sigma$ of
$\SO_{B}(\bar{F})$ 
by $\phi_\sigma:=\mu\circ\sigma\circ\mu^{-1}\circ\sigma^{-1}$. Since the Dynkin
diagram $D_{n+1}$ has exactly one non-trivial automorphism, 
there is a natural isomorphism
$\Aut(\SO_{B}(\bar{F}))\cong\PO_{B}(\bar{F})$, where 
$\PO_{B}(\bar{F})$ acts on $\SO_{B}(\bar{F})$ by conjugation, and
thus 
there exists a unique $a_\sigma\in\PO_{B}(\bar{F})$ such that for each
$g\in \SO_{B}(\bar{F})$ one 
has $\phi_\sigma(g)=a_\sigma g a_\sigma^{-1}$. Thus one has 
\begin{align}\label{eq1}
\mu(\sigma(g))=a_\sigma\sigma(\mu(g))a_\sigma^{-1}.
\end{align}
We can regard the assignment $\sigma\to a_\sigma$ 
as an element of the first Galois-cohomology set
$H^1(\Gal(\bar{F}/F),\PO_{B}(\bar{F}))$.
By Lemma \ref{Lem} there exist a representation $\tilde{\tau}$ of 
$\PO_{B}(\bar{F})$ on $V_{\tilde{\tau}}=V_{\tau}\oplus V_{\tau'}$
which restricts to $\tau\oplus\tau'$ 
on $\SO_{B}(\bar{F})$. The assignment
$\sigma\mapsto\tilde{\tau}(a_\sigma)$ 
is an element of $H^1(\Gal(\bar{F}/F),\Gl(V_{\tilde{\tau}}))$ and since this set
is 
trivial by Hilbert's theorem 90, there exists an $x\in \Gl(V_{\tilde{\tau}})$
such
that 
\begin{align}\label{eq2}
\tilde{\tau}(a_\sigma)=x^{-1}\sigma(x)\quad\forall \sigma\in \Gal(\bar{F}/F).
\end{align}
Now define a representation $\rho$ of $G(\bar{F})$ by $\rho:=\Int(x)\circ
(\tau+\tau')\circ \mu$. Then $\rho$ is equivalent to $(\tau+\tau')\circ\mu$.
Applying \eqref{eq1} and \eqref{eq2} it follows
that for $\sigma\in\Gal(\bar{F}/F)$ and $g\in G(\bar{F})$ one has 
\begin{align*}
\rho(\sigma(g))=x\tilde{\tau}(a_\sigma)(\tau+\tau')(\sigma(\mu(g)))\tilde{\tau}
(a_\sigma^{-1})x^{-1}=\sigma(x)\sigma\left((\tau+\tau')(\mu(g))\right)\sigma(x)^
{-1}=\sigma(\rho(g)),
\end{align*}
where we used that $\tau+\tau'$ is defined over $F$ and hence commutes with
$\Gal(\bar F/F)$. 
Thus $\rho$ commutes with $\Gal(\bar{F}/F)$ and thus it is defined over $F$. 
\end{proof}

Now we may fix an embedding of $\SO^0(p,q)$ into $\SO_{B}(\C)$ such that 
the representations of $\SO^0(p,q)$ with highest weight
$m_1e_1+\dots+m_{n+1}e_{n+1}$ 
are the restrictions to $\SO^0(p,q)$ of the representation of $\SO_{B}(\C)$
with highest weight $m_1f_1+\dots+m_{n+1}f_{n+1}$.

The following proposition is certainly well known and was used already 
by Bergeron and Venkatesh \cite[section 8.1]{BV}. However, for 
the sake of completeness we include a proof here. If $V$ is a finite-dimensional
$F$-vector
space, let $V^\sigma$ be the $F$-vector space with scalar-multiplication $a\cdot
v:=\sigma(a) v$, $a\in F$, $v\in V$.

\begin{lem}\label{LemQ}
Let $G'$ be an algebraic group defined over $\Q$. Let $V$ be a
finite-dimensional $F$-vector space and let $\rho:G'\to \Gl(V)$ be 
a representation defined over $F$. Then
$\tilde{\rho}:=\prod_{\sigma\in\Sigma}\rho^{\sigma^{-1}}$ is defined over $\Q$,
where 
$\rho^{\sigma^{-1}}$ is regarded as an $F$-rational representation of $G'$ on
$V^{\sigma}$.
\end{lem}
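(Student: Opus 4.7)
The plan is to exhibit $\tilde\rho$ as the base change to $F$ of a $\Q$-rational representation obtained from $\rho$ by restricting scalars from $F$ to $\Q$.

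First I would denote by $V_\Q$ the $\Q$-vector space underlying $V$ (of dimension $d\cdot\dim_F V$) and form the composition
\[
\hat\rho\colon G'\xrightarrow{\rho}\GL(V)\hookrightarrow\GL(V_\Q),
\]
where the inclusion uses that every $F$-linear map of $V$ is $\Q$-linear. I would then verify that $\hat\rho$ is $\Q$-rational by a direct computation with bases: choose a $\Q$-basis $\alpha_1,\dots,\alpha_d$ of $F$ and an $F$-basis $v_1,\dots,v_n$ of $V$, so that $(\alpha_k v_j)_{k,j}$ is a $\Q$-basis of $V_\Q$. Writing the matrix coefficients of $\rho$ in the basis $(v_j)$ as $a_{ij}\in F[G']$ and expanding uniquely $a_{ij}=\sum_l a_{ij,l}\alpha_l$ with $a_{ij,l}\in\Q[G']$, the matrix coefficients of $\hat\rho$ in the basis $(\alpha_k v_j)$ are $\Q$-linear combinations of the $a_{ij,l}$'s whose scalars come from the structure constants of multiplication in $F$ relative to $(\alpha_l)$, and in particular they lie in $\Q[G']$.

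The second step is to identify the base change $\hat\rho\otimes_\Q F\colon G'_F\to\GL(V_\Q\otimes_\Q F)$ with $\tilde\rho$. For this I would use the canonical Galois decomposition
\[
F\otimes_\Q F\;\xrightarrow{\sim}\;\prod_{\sigma\in\Sigma} F,\qquad a\otimes b\mapsto (\sigma(a)\,b)_{\sigma\in\Sigma},
\]
which induces an $F$-linear isomorphism
\[
V_\Q\otimes_\Q F\;=\;V\otimes_F(F\otimes_\Q F)\;\cong\;\prod_{\sigma\in\Sigma}V^\sigma.
\]
A direct check on matrix coefficients then identifies $\hat\rho\otimes_\Q F$ under this isomorphism with $\tilde\rho=\prod_\sigma\rho^{\sigma^{-1}}$ (with the $\sigma$-th factor acting on $V^\sigma$, consistent with the indexing in the statement). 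Since $\hat\rho$ was shown to be $\Q$-rational, its base change $\hat\rho\otimes_\Q F$ is by construction defined over $\Q$; hence $\tilde\rho$ is defined over $\Q$.

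The main bookkeeping obstacle lies in the identification in the second step: verifying that the summand indexed by $\sigma$ in the Galois decomposition carries precisely the twist $\rho^{\sigma^{-1}}$ (rather than some other Galois conjugate of $\rho$). This reduces to a careful comparison between the twisted $F$-action on $V^\sigma$ and the matrix coefficients $\sigma(a_{ij})$ defining $\rho^\sigma$, using the identity $\sigma(a)(g)=\bar\sigma\bigl(a(\bar\sigma^{-1}(g))\bigr)$ for $g\in G'(\bar F)$; the calculation is straightforward but requires keeping track of conventions at each step.
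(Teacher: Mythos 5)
Your argument is correct, and it identifies the same underlying $\Q$-form as the paper does, but the verification runs along a different track. The paper's proof is a short application of the Galois-descent machinery set up in Section~2: one equips $W:=\prod_{\sigma\in\Sigma}V^\sigma$ with the semi-linear $\Sigma$-action that permutes the factors, observes that the fixed-point $\Q$-structure $W^\Sigma$ is exactly $V$ regarded as a $\Q$-vector space and embedded diagonally (this is precisely your $V_\Q$ under the isomorphism $V_\Q\otimes_\Q F\cong W$), and then checks that $\tilde\rho$ intertwines the $\Sigma$-actions on $G'$ and on $\GL(W)$. By the descent criterion \cite[AG.14]{Bo2} this immediately gives that $\tilde\rho$ is defined over $\Q$. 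You instead build the $\Q$-model $\hat\rho$ on $V_\Q$ by restricting scalars, verify its $\Q$-rationality hands-on by expanding matrix coefficients in a $\Q$-basis of $F$, and then identify $\hat\rho\otimes_\Q F$ with $\tilde\rho$ via the splitting $F\otimes_\Q F\cong\prod_\sigma F$. The two routes are logically equivalent — your final isomorphism is nothing other than the statement that $V_\Q$ is the $\Sigma$-fixed $\Q$-structure of $W$ — but they distribute the work differently. The descent criterion lets the paper sidestep the index bookkeeping you flag at the end: checking that $\tilde\rho$ commutes with the permutation action only requires the evident identity $(\rho^{\sigma^{-1}})^\tau=\rho^{\tau\sigma^{-1}}$, so no explicit matching of the $\sigma$-th factor with the twist $\rho^{\sigma^{-1}}$ on $V^\sigma$ is needed. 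Your explicit construction is more self-contained (it does not presuppose the semi-linear-action formalism) and makes the $\Q$-form concretely visible as the restriction-of-scalars representation, at the cost of having to carry out the Galois-conjugate index comparison carefully; that is indeed the one place where an inverse could be misplaced, and you are right to single it out.
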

\begin{proof}
Each $\sigma\in\Sigma$ acts on $\prod_{\sigma\in\Sigma}V^\sigma$ as a 
$\sigma$-linear automorphism by permuting the factors. The corresponding 
$\Q$-structure of $\prod_{\sigma\in\Sigma}V^\sigma$ is $V$, regarded as a
$\Q$-vector
space and embedded diagonally into $\prod_{\sigma\in\Sigma}V^\sigma$ . Now it is
easy to see that $\tilde{\rho}$ commutes with the action of $\Sigma$ on 
$G'$ and the action of $\Sigma$ on $\Gl(\prod_{\sigma\in\Sigma}V^\sigma)$
associated 
to this $\Q$-structure. Thus $\tilde{\rho}$ is defined over $\Q$. 
\end{proof}

Let $G^\prime(\R)^0$ be the connected component of $1\in G^\prime(\R)$ and
$G^\prime(\R)^c:= \prod_{\sigma\in\Sigma\setminus\{1\}}\SO(p+q)$. Then we have
\[
G^\prime(\R)^0\cong\SO^0(p,q)\times G^\prime(\R)^c.
\]
Let $\theta$ be the standard Cartan-involution of $\SO^0(p,q)$. Then
$\theta\otimes \Id_{G^\prime(\R)^c}$ is a Cartan involution of $G'(\R)^0$ which
we 
continue to denote by $\theta$. By $\Rep(G'(\R)^0)$ we denote 
the finite-dimensional irreducible complex representations of $G'(\R)^0$. For
$\tau\in\Rep(G'(\R)^0)$, let
$\tau_\theta$ be the element of $\Rep(G'(\R)^0)$ defined by 
$\tau_\theta:=\tau\circ\theta$. 

\begin{prop}\label{Proprep}
There exists a sequence $\rho(m)$ of $\Q$-rational representations of $G'$ on
finite-dimensional
$\Q$-vector spaces $V_{\rho(m)}$ such that 
\begin{enumerate}
\item For the decomposition
\begin{equation}\label{decrho}
\rho(m)=\bigoplus_{\tau\in\Rep(G'(\R)^0)}[\rho(m):\tau]\tau,
\end{equation}
$[\rho(m):\tau]\in\mathbb{N}^0$ of $\rho(m)$, regarded as a complex
representation 
of $G'(\R)^0$ on the vector space $V_{\rho(m)}\otimes_\Q\C$,  
into irreducible representations of $G'(\R)^0$ one has $\tau\neq\tau_\theta$ for
each
$\tau\in\Rep(G'(\R)^0)$ with 
$[\rho(m):\tau]\neq 0$. 
\item The dimension $\dim(V_{\rho(m)})$ is a polynomial in $m$ with leading
term 
\begin{align*}
\dim(V_{\rho(m)})=C\,d\,m^{d n(n+1)/2} + O(m^{d(n+1)/2-1}),
\end{align*}
 as $m\to\infty$, where $C>0$ is a constanst which depends only on $n$. 
\end{enumerate}
\end{prop}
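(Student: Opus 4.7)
The plan is to combine Lemma \ref{Lemma2} (which yields $F$-rational representations of $G$ from split-form data on $\SO_{B}$) with a Galois-orbit external tensor product, and then apply Lemma \ref{LemQ} to descend to $\Q$. The central difficulty is condition (1): since the Cartan involution $\theta$ is trivial on the compact factor $\prod_{\sigma\neq 1}\SO(p+q)$ of $G'(\R)^0$, every irreducible component of $\rho(m)|_{G'(\R)^0}$ must carry a non-$\theta$-invariant representation on the $\SO^0(p,q)$-slot. This forces the construction to mix all $d$ Galois conjugates in a single tensor product rather than merely projecting onto the $\sigma=1$ factor, which is where the main work lies.

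Concretely, I would fix a dominant weight of $T\subset\SO_{B}$ with nonzero $f_{n+1}$-coefficient and even coordinate sum, say $\omega := 2\sum_{i=1}^{n+1} f_i$, and for each $m\in\N$ let $\tau_m$ be the irreducible representation of $\SO_{B}(\bar{F})$ with highest weight $m\omega$. The parity condition ensures $\tau_m$ descends to $\PSO_{B}$, so Lemma \ref{Lemma2} produces an $F$-rational representation $\rho_m\colon G\to\GL(V_m)$ whose base change to $\bar{F}$ is $(\tau_m+\tau_m')\circ\mu$. Using the decomposition \eqref{eqalpha}, I would form the external tensor product $\Pi_m := \bigotimes_{\sigma\in\Sigma}\rho_m^{\sigma}$ as an $F$-rational representation of $G'\times_\Q F$. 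Since $\Sigma$ permutes both the factors of $G'\times_{\Q} F$ and the tensor factors of $\Pi_m$ in a compatible way, $\Pi_m$ is $\Sigma$-equivariant; Galois descent (put in the form of Lemma \ref{LemQ}) then produces the desired $\Q$-rational representation $\rho(m)$ of $G'$.

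For part (1), each irreducible summand of $\rho(m)\otimes\C$ as a representation of $G'(\R)^0 \cong \SO^0(p,q)\times K_1$ has the form of an external tensor product whose $\SO^0(p,q)$-slot $\pi_1$ lies in $\{\tau_m,\tau_m'\}$ and whose other slots carry Galois twists of these on the respective $\SO(p+q)$-factors. Since $\theta$ acts trivially on $K_1$, the condition $\tau=\tau_\theta$ reduces to $(\pi_1)_\theta = \pi_1$, which by \eqref{lambdatheta} would require the $e_{n+1}$-coordinate of the highest weight of $\pi_1$ to vanish; this fails because that coordinate equals $\pm 2m\neq 0$. For part (2), Weyl's dimension formula applied to $\lambda_m = m\omega$ shows that only the $\binom{n+1}{2}$ positive roots of the form $f_i+f_j$ pair nontrivially with $\omega$, so $\dim\tau_m = c_n\, m^{n(n+1)/2}+O(m^{n(n+1)/2-1})$ for some $c_n>0$ depending only on $n$; combining the $d$-fold tensor product contribution $(2\dim\tau_m)^d$ with the factor-of-$d$ multiplicity coming from Lemma \ref{LemQ} yields the stated leading term $C\,d\,m^{dn(n+1)/2}$. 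The main technical obstacle will be this dimension bookkeeping, namely reconciling the tensor-product contribution with the descent datum so as to identify the leading coefficient in the form $Cd$ with $C$ depending only on $n$.
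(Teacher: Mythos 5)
Your proposal follows essentially the same route as the paper: fix a highest weight $m\omega$ with nonzero $f_{n+1}$-coordinate and even coordinate sum, use Lemma~\ref{Lemma2} to produce an $F$-rational representation of $G$, take the external tensor product over $\Sigma$ of Galois conjugates (the paper's $\rho_0(m)$, your $\Pi_m$), observe it is isomorphic to all its conjugates, apply Lemma~\ref{LemQ} to get a $\Q$-rational representation equal to $d$ copies of it, and then check non-$\theta$-invariance of the $\SO^0(p,q)$-slot and the leading term via Weyl's dimension formula. The only cosmetic difference is that you take the coefficient $2m$ uniformly, whereas the paper uses $2m$ for $n$ even and $m$ for $n$ odd to satisfy the $\PSO_B$-parity condition more economically; this changes only the unspecified constant $C$.
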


\begin{proof}
The Galois group $\Sigma$ acts on $\prod_{\sigma\in\Sigma}G^\sigma$ as 
follows. For $g\in\prod_{\sigma\in\Sigma}G^\sigma$ 
and $\sigma\in\Sigma$ we denote the projection of $g$ to $G^\sigma$ by
$g_\sigma$.
Then for $\sigma,\sigma'\in\Sigma$ one has 
\begin{align*}
(\sigma g)_{\sigma'}=\sigma(g_{\sigma^{-1}\sigma'}).
\end{align*}
Now assume that for each $\sigma\in\Sigma$ we are given a finite-dimensional
$F$-vector space $V_{\rho(\sigma)}$ and a representation $\rho(\sigma)$ of
$G^\sigma$ on $V_{\rho(\sigma)}$, defined 
over $F$. Then the tensor-product
\begin{align*}
\rho:=\bigotimes_{\sigma\in\Sigma}\rho(\sigma)
\end{align*}
is a representation of $\prod_{\sigma\in\Sigma}G^\sigma$ on
$\bigotimes_{\sigma\in\Sigma}V_{\rho(\sigma)}$ and it follows that for
$\sigma'\in\Sigma$ one has
\begin{align}\label{rhosigma}
\rho^{\sigma'}=\bigotimes_{\sigma\in\Sigma}\rho(\sigma'^{-1}\sigma)^{\sigma'}.
\end{align}
Now if $n$ is even, for $m\in\mathbb{N}$ we let $\tau(m)$ be the representation
of $G$ over $\bar{F}$ 
of highest weight $2me_1+\dots+2me_{n+1}$. If $n$ is odd, we let $\tau(m)$ be
the representation 
of highest weight $me_1+\dots+me_{n+1}$. Then $\tau(m)$ and $\tau(m)_\theta$
descend to representations 
of $PG$. Thus by Lemma \ref{Lemma2} there exists a representation of $G$ over 
$F$ which over $\bar{F}$ is equivalent to $\tau(m)+\tau(m)_\theta$. Thus if we 
define $\rho_0(m)$ by 
\begin{align}\label{rho0}
\rho_0(m):=\bigotimes_{\sigma\in\Sigma}(\tau(m)+\tau(m)_\theta)^\sigma,
\end{align}
then $\rho_0(m)$ is defined over $F$ and by \eqref{rhosigma}, $\rho_0(m)$ is
equivalent 
to $\rho_0(m)^\sigma$ for each $\sigma\in\Sigma$. Hence if we let $\rho(m)$ 
be the direct sum of $d$ copies of $\rho_0(m)$ then by Lemma \ref{LemQ}
$\rho(m)$
is defined over $\Q$. Each irreducible component of $\rho(m)|_{G^\prime(\R)^0}$,
regarded as a complex representation 
of $G'(\R)^0$ on $V_{\rho(m)}\otimes_\Q\C$, is 
of the form
$\tau(m)\otimes\pi$, or $\tau(m)_\theta\otimes\pi^\prime$, where $\pi$ and
$\pi^\prime$ are irreducible representations of $G^\prime(\R)^c$. 
Since $\tau(m)$ and $\tau(m)_\theta$ are 
not $\theta$-invariant, the same holds for each irreducible component of
$\rho(m)|_{G^\prime(\R)^0}$. 
This proves the first statement. The second statement follows from Weyl's
dimension 
formula. 
\end{proof}

We can now turn to the proof of Theorem \ref{Thrm1}. Let $\Delta$ be the
diagonal embedding of $G$ into $\prod_{\sigma\in\Sigma}G^\sigma$. Then  
we can choose the isomorphism $\alpha$ in \eqref{eqalpha} such that
$\alpha(G'_\Z)=\Delta(G_{\mathcal{O}_F})$. Let $\Gamma\subset G_{\mathcal{O}_F}$
be a subgroup of finite index. Via the isomorphism $G(\R)\cong \SO(p,q)$ we
identify $\Gamma$ with a subgroup of $\SO(p,q)$. We choose $\Gamma$ such that
it is torsion free and is contained in $\SO^0(p,q)$. By Lemma \ref{Lemma1}, 
$\Gamma$ is a cocompact lattice in $\SO^0(p,q)$. Let $\Gamma^\prime=
\Delta(\Gamma)$. Since $\Gamma$ and $\Gamma^\prime$ are isomorphic, it suffices
to prove the statements of Theorem \ref{Thrm1} for $\Gamma^\prime$. 

The group $K_0:=\SO(p)\times\SO(q)$ is a a maximal compact subgroup of
$\SO^0(p,q)$. Put
\begin{align*}
K':=K_0\times\prod_{\substack{\sigma\in\Sigma\\ \sigma\neq
1}}\SO(p+q).
\end{align*}
Then $K^\prime$ is a maximal compact subgroup of $G^\prime(\R)^0$. 
Put $\tilde{X}':=G'(\R)^0/K'$ and $X':=\Gamma'\backslash \tilde{X}'$. 
Let $(\rho(m),V_{\rom})$ be the sequence of $\Q$-rational representations of 
$G'$ of Proposition \ref{Proprep}. Since each $\rho(m)$ is defined 
over $\Q$, there exists a free $\Z$-module $M_{\rom}$ in $V_{\rom}$ 
which is stable under $\rho(m)(\Gamma')$ and such that 
$M_{\rom}\otimes_\Z\Q\cong V_{\rom}$, see for example \cite[page 173]{PR}. 
Let  $V_{\rom}^\C:=V_{\rom}\otimes_\Q\C$. 
Then the restriction of $\rho(m)$ to $\Gamma'$ induces the flat 
complex vector bundle 
\begin{align*}
E_{\rom}:=\tilde{X'}\times_{\rho(m)|_{\Gamma'}}V_{\rom}^\C
\end{align*}
over $X'$. 
The decomoposition \eqref{decrho} of $\rho(m)$ induces a 
corresponding decomposition of $E_{\rom}$ into the direct sum of complex vector
bundles 
associated to the
restriction to $\Gamma^\prime$ of irreducible finite-dimensional
representations 
$\tau$ of $G'(\R)^0$. By Proposition \ref{Proprep}, each $\tau$ with
$[\rho(m):\tau]\neq 0$ satisfies 
$\tau\neq\tau_{\theta}$ and thus by \cite[Chapter VII, Theorem 6.7]{BW} one has
\begin{align}\label{vancoho}
H^*(X';E_{\rom})=0,
\end{align}
where $H^*(X';E_{\rom})$ denotes the de Rham cohomology with coefficients in
$E_{\rom}$. 
Chose a Hermitian fibre metric in $E_{\rom}$. Let $T_{X'}(\rho(m))$ 
be the analytic torsion of $X^\prime$ and $\rho(m)$ (see \eqref{anator}). 
It follows from 
\eqref{vancoho} that $T_{X'}(\rho(m))$ is metric independent 
\cite[Corollary 2.7]{Mu1}. Moreover $H^*(\Gamma^\prime,M_{\rom})$ is a finite
abelian group and by \eqref{cohotor} we have
\begin{equation}\label{cohotor1}
\log T_{X^\prime}(\rho(m))=\sum_{q=0}^n
(-1)^{q+1}\log|H^q(\Gamma^\prime,M_{\rom})|.
\end{equation}
This equality reduces the proof of Theorem \ref{Thrm1} to the study of the 
asymptotic behavior of $T_{X'}(\rho(m))$ as $m\to\infty$, which is exactly the
problem that has been dealt with in \cite{MP}. Since $\{\rho(m)\}$ is not a 
sequence of representations that has been considered in \cite{MP}, we cannot 
apply the results of \cite{MP} directly. We first need to reduce it to a case
to which \cite{MP} can be applied.

Let $\rho_0(m)$ be defined by \eqref{rho0} and let $T_{X^\prime}(\rho_0(m))$ be
the
corresponding analytic torsion. Since $\rho(m)$ is the direct sum 
of $d$ copies of $\rho_0(m)$, we get
\begin{align*}
\log T_{X'}(\rho(m))=d\log T_{X'}(\rho_0(m))
\end{align*}
Now let 
$T_{X'}^{(2)}(\rho_0(m))$ be the $L^2$-torsion with respect to $\rho_0(m)$ (see 
\cite[section 5]{MP}). If we apply \cite[Proposition 1.2]{MP} to the irreducible
components of $\rho_0(m)$, it follows that there exists $c>0$ such that
\begin{align*}
\log{T_{X'}(\rho_0(m))}=\log{T_{X'}^{(2)}(\rho_0(m))}+O(e^{-cm}),
\end{align*}
as $m\to\infty$. Using the definition of $\rho_0(m)$ by \eqref{rho0},  
\cite[(5.21)]{MP} and \cite[Proposition 5.3]{MP}, it follows  that
\begin{align*}
\log T_{X'}^{(2)}(\rho_0(m))=
\left(\log T_{X^\prime}^{(2)}(\tau(m))+\log
T_{X^\prime}^{(2)}{\tau(m)_\theta}\right)(2\dim\tau(m))^{d-1}.
\end{align*}
If $C_{p,q}$ is as in \eqref{Cpq}, then by \cite[Proposition 6.7]{MP} one has
\begin{align*}
\log{T_{X^\prime}^{(2)}}(\tau(m))=\log{T_{X^\prime}^{(2)}}(\tau(m)_\theta)=C_{p,
q}
\vol(X^\prime)m\dim\tau(m)+O(\dim(\tau(m)),
\end{align*}
as $m\to\infty$. Thus putting everything together we obtain 
\begin{align*}
\log{T_{X'}(\rho(m))}=C_{p,q}\vol(X^\prime)m\dim(\rho(m))+
O(\dim(\rho(m))),
\end{align*}
as $m\to\infty$.  Since $X\cong X^\prime$ and $H^*(\Gamma,M_{\rho(m)})\cong
H^*(\Gamma^\prime,M_{\rho(m)})$, Theorem \ref{Thrm1} follows from 
\eqref{cohotor1} and the second statement of Proposition \ref{Proprep}.

\section{Arithmetic subgroups of $\SL_3(\R)$}\label{secsldrei}
Let $D$ be a nine-dimensional division algebra over $\Q$. Then by the
Brauer-Hasse-Noether theorem \cite{Ro}, 
$D$ is a cyclic algebra for a cubic extension $L$ of $\Q$. Moreover,
$L$ 
splits $D$, i.e. there exists an isomorphism of $L$-algebras
\begin{align}\label{phi}
\phi:D\otimes_\Q L\cong \Mat_{3\times 3}(L).
\end{align}
Thus for $x\in D$ the reduced norm $N(x)$ is given by $N(x):=\det(\phi(x\otimes
1))$. 
Now let $G:=\SL_1(D)$, where
\begin{align*}
\SL_1(D):=\{x\in D\colon  N(x)=1\}.
\end{align*}
Then by \cite[2.3.1]{PR}, $G$ has a canonical structure of an algebraic group
defined
over $\Q$. We regard $\SL_3$ as an algebraic group over $\Q$. 
The isomorphism $\phi$ from \eqref{phi} induces an isomorphism
\begin{align*}
\phi:G(L)\cong \SL_3(L),
\end{align*}
i.e. $G$ is a form of $\SL_3$ over $L$. Moreover, the following Lemma holds. 
\begin{lem}\label{Lem1sl3}
Let $\rho$ be a $\Q$-rational representation of $\SL_3$. Then there exists a 
$\Q$-rational representation of $G$ which over $L$ is equivalent to
$\rho\circ\phi$.
\end{lem}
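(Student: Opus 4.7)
The plan is to mimic the Galois-descent argument used for Lemma \ref{Lemma2}, with $\operatorname{PGL}_3$ playing the role of $\PO_B$. For each $\sigma\in\Gal(\bar\Q/\Q)$, I would set $\phi_\sigma:=\phi\circ\sigma\circ\phi^{-1}\circ\sigma^{-1}$, an automorphism of $\SL_3(\bar\Q)$ as an algebraic group. Since $D$ is a central simple $\Q$-algebra, the $\Q$-group $G=\SL_1(D)$ is by construction an \emph{inner} form of $\SL_3$, so $\phi_\sigma$ is an inner automorphism: there exists $a_\sigma\in \operatorname{PGL}_3(\bar\Q)$ with $\phi_\sigma=\Int(a_\sigma)$, and
\begin{equation*}
\phi(\sigma(g))=a_\sigma\,\sigma(\phi(g))\,a_\sigma^{-1},\qquad g\in G(\bar\Q).
\end{equation*}
The assignment $\sigma\mapsto a_\sigma$ gives a class in $H^1(\Gal(\bar\Q/\Q),\operatorname{PGL}_3(\bar\Q))$, corresponding via the connecting map to the Brauer class $[D]$.

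Next I would extend $\rho\colon \SL_3\to \GL(V)$ to a $\Q$-rational representation $\tilde\rho\colon \GL_3\to \GL(V)$ by prescribing a character on $Z(\GL_3)=\mathbb{G}_m$ that restricts to the central character of $\rho$ on $\mu_3\subset Z(\SL_3)$. After choosing lifts $\tilde a_\sigma\in \GL_3(\bar\Q)$ of the $a_\sigma$, the assignment $\sigma\mapsto \tilde\rho(\tilde a_\sigma)$ defines a $1$-cocycle in $Z^1(\Gal(\bar\Q/\Q),\GL(V)(\bar\Q))$, once the central character of $\tilde\rho$ has been arranged to absorb the scalar ambiguity coming from the lifts. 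By Hilbert's Theorem 90,
\begin{equation*}
H^1(\Gal(\bar\Q/\Q),\GL(V)(\bar\Q))=1,
\end{equation*}
so there exists $x\in \GL(V)(\bar\Q)$ with $\tilde\rho(\tilde a_\sigma)=x^{-1}\sigma(x)$ for every $\sigma$. Setting $\rho':=\Int(x)\circ \rho\circ\phi$, the same final calculation as in the proof of Lemma \ref{Lemma2} gives $\rho'(\sigma(g))=\sigma(\rho'(g))$ for all $g\in G(\bar\Q)$, so $\rho'$ commutes with $\Gal(\bar\Q/\Q)$ and is therefore defined over $\Q$. Since $\rho'$ differs from $\rho\circ\phi$ by conjugation by $x\in \GL(V)(\bar\Q)$, the two representations are equivalent over $\bar\Q$, and in particular over $L$.

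The main obstacle, and the only genuinely new point beyond Lemma \ref{Lemma2}, is showing that $\sigma\mapsto\tilde\rho(\tilde a_\sigma)$ really is a $1$-cocycle. The lifts obey only the twisted relation $\tilde a_{\sigma\tau}=\tilde a_\sigma\,\sigma(\tilde a_\tau)\,z(\sigma,\tau)$ with $z\in Z^2(\Gal(\bar\Q/\Q),\mathbb{G}_m)$ representing the Brauer class $[D]$, so applying $\tilde\rho$ introduces an extra scalar factor governed by the central character of $\tilde\rho$; this factor is precisely the obstruction to descending $\rho\circ\phi$ to $\Q$. Because $[D]$ has order $3$ in $\mathrm{Br}(\Q)$, a suitable choice of the central character for $\tilde\rho$ makes the obstruction vanish, after which the argument proceeds verbatim as in Lemma \ref{Lemma2}; this is the step with no direct counterpart there, since in the orthogonal case the representation $\tilde\tau$ was already defined on $\PO_B$ and produced the cocycle relation for free.
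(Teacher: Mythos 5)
Your overall strategy---Galois descent modeled on Lemma~\ref{Lemma2}, with $\operatorname{PGL}_3$ in place of $\PO_B$ and the inner-form cocycle $a_\sigma$ supplied by the fact that $G=\SL_1(D)$ is an inner form of $\SL_3$---is the natural generalization of Lemma~3.1 of \cite{MaM} that the paper's one-line proof is pointing at, and you correctly isolate the one genuinely new feature: unlike $\tilde\tau$ in Lemma~\ref{Lemma2}, which lives on $\PO_B$ and so eats $a_\sigma$ directly, here the $a_\sigma$ need not lift to a $1$-cocycle in $\GL_3(\bar\Q)$, the defect being a $2$-cocycle $z$ representing $[D]\in\operatorname{Br}(\Q)$.

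The gap is in the closing claim that ``a suitable choice of the central character for $\tilde\rho$ makes the obstruction vanish.'' It cannot, in general, and the reason is precisely the constraint you imposed two sentences earlier. You need $\tilde\rho|_{\SL_3}=\rho$, which forces the restriction of the central character $\chi$ of $\tilde\rho$ to $\mu_3\subset\mathbb{G}_m$; writing $\chi(t)=t^{k}$, this fixes $k\bmod 3$ to be the exponent of $\rho|_{\mu_3}$. The obstruction you have identified is $\chi_*[z]=k\cdot[D]$. Since $D$ is a cubic division algebra, $[D]$ has order exactly $3$ in $\operatorname{Br}(\Q)$, so $k\cdot[D]$ depends only on $k\bmod 3$; the only freedom you have, replacing $k$ by $k+3j$, does not change it. Hence the obstruction vanishes if and only if $\rho$ is trivial on $Z(\SL_3)$. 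When $\rho|_{\mu_3}$ is nontrivial the class $k\cdot[D]$ is the Brauer class of the Tits algebra of $\rho$ for the inner form $G$ (in the sense of \cite{Tits}), and it is a genuine obstruction: for instance the standard $3$-dimensional representation of $\SL_3$ has no $\Q$-form as a representation of $G$ (its minimal $\Q$-rational avatar is the $9$-dimensional left-regular action of $G$ on $D$, which is $\Q$-irreducible with endomorphism algebra $D^{\mathrm{op}}$). So carried out honestly your descent proves the lemma only under the extra hypothesis that $\rho$ be trivial on $\mu_3$, and in fact shows the statement fails otherwise; you would need either to add that hypothesis or to weaken the conclusion, e.g., to the existence of a $\Q$-rational $G$-module that is a multiple of $\rho\circ\phi$ over $\bar\Q$, obtained by realizing $\rho\circ\phi$ with values in $\GL_1$ of the Tits algebra.
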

\begin{proof}
By \cite[Proposition 2.17]{PR}, $G$ is an inner form of $\SL_3$. Thus the proof
of Lemma 3.1 in
\cite{MaM} 
can be generalized without difficulties to prove the Lemma. 
\end{proof}
Let $\mathfrak{o}$ be an order in $D$, i.e., $\mathfrak{o}$ is a free
$\Z$-submodule of $D$ which 
is generated by a $\Z$-base of $D$ and which is also a subring of $D$. Put 
\begin{align*}
\mathfrak{o}^1:=\{x\in\mathfrak{o}\colon N(x)=1\}.
\end{align*}
The left regular representation of $D$ on itself induces a $\Q$-rational
representation 
of $G$ on $D$, see \cite[2.3.1]{PR} and the stabilizer of $\mathfrak{o}$ is
$\mathfrak{o}^1$. 
Thus $\mathfrak{o}^1$ is arithmetic subgroup of
$G(\Q)$. Put 
\begin{align*}
\Gamma:=\phi(\mathfrak{o}^1).
\end{align*}
Then $\Gamma$ is an arithmetic subgroup of $\SL_3(\R)$. Moreover, the following
lemma holds.
\begin{lem}
The group $\Gamma$ is a cocompact subgroup of $\SL_3(\R)$. 
\end{lem}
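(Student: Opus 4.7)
The plan is to apply the Borel--Harish-Chandra compactness criterion: for a semisimple algebraic $\Q$-group $H$, an arithmetic subgroup of $H(\Q)$ is cocompact in $H(\R)$ if and only if $H$ is anisotropic over $\Q$, equivalently, $H(\Q)$ contains no non-trivial unipotent element. Since the paragraph preceding the lemma already shows that $\mathfrak{o}^1$ is an arithmetic subgroup of $G(\Q)$, the task reduces to verifying anisotropy of $G=\SL_1(D)$ over $\Q$ and then transporting the resulting cocompactness through $\phi$.

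First I would verify that $G$ is $\Q$-anisotropic. If $u\in G(\Q)\subset D^{\times}$ were a non-trivial unipotent element, then $u-1\in D$ would be a non-zero nilpotent element; but a division algebra has no non-zero nilpotent elements, so $u=1$. Hence $G(\Q)$ contains no non-trivial unipotent element, and \cite[Theorem 12.3]{BH} (applied exactly as in the proof of Lemma~\ref{Lemma1}) yields that $\mathfrak{o}^1$ is a discrete cocompact subgroup of $G(\R)$.

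Next I would identify $G(\R)$ with $\SL_3(\R)$. Since $D$ is a central simple $\Q$-algebra of degree $3$, its local invariant at the archimedean place is a $3$-torsion element of $\operatorname{Br}(\R)\cong\tfrac{1}{2}\Z/\Z$, and therefore vanishes. Consequently $D\otimes_\Q\R\cong\Mat_{3\times 3}(\R)$, and the splitting isomorphism $\phi$ of \eqref{phi} extends (after composing with a suitable conjugation) to a Lie group isomorphism $G(\R)\cong\SL_3(\R)$, under which $\mathfrak{o}^1$ is carried precisely to $\Gamma=\phi(\mathfrak{o}^1)$. Cocompactness of $\Gamma$ in $\SL_3(\R)$ then follows.

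There is no substantive obstacle: once the compactness criterion is in hand, the only non-formal inputs are the absence of non-zero nilpotents in a division algebra and the splitting of $D$ at the real place, both of which are immediate. The argument is entirely parallel to the proof of Lemma~\ref{Lemma1}, with the division-algebra structure of $D$ playing the role that anisotropy of the quadratic form $Q$ played there.
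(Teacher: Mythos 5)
Your argument is correct and follows essentially the same route as the paper's: anisotropy of $G=\SL_1(D)$ over $\Q$ plus the Borel--Harish-Chandra compactness criterion. The only difference is that where the paper simply cites \cite[Proposition 2.12]{PR} for anisotropy, you give the direct one-line argument --- a non-trivial unipotent $u\in G(\Q)$ would make $u-1$ a non-zero nilpotent element of the division algebra $D$, which is impossible --- which is more self-contained and is exactly what underlies the cited proposition. You also make explicit, via the vanishing of the local invariant at the real place, the identification $G(\R)\cong\SL_3(\R)$ under which $\mathfrak{o}^1$ goes to $\Gamma$; the paper leaves this implicit. Both additions are harmless and clarifying; neither constitutes a genuinely different approach.
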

\begin{proof}
By \cite[Proposition 2.12]{PR}, $G$ is anisotropic over $\Q$. Thus the
proposition follows from 
\cite[Lemma 11.4, Theorem 11.8]{BH}.
\end{proof}

Let $T$ be the standard maximal torus in $\SL_3$ consisting of the diagonal
matrices of determinant $1$. Then $T$ is defined over $\Q$ and is $\Q$-split. 
Let $\tL$ be the Lie-algebra of $T(\R)$ consisting of all diagonal matrices 
of trace $0$. Let $e_i\in\tL^*$ be defined by
$e_i(\diag(t_1,t_2,t_3)):=\sum_{j=1}^3\delta_{i,j}t_j$. 
Then with respect to the standard odering of the roots of $\tL_\C$ in
$\mathfrak{sl}_{3,\C}$ the fundamental weights
$\omega_1,\omega_2\in\tilde{\tL}_\C^*$  
are given by
\begin{align}\label{fweights}
\omega_1=\frac{2}{3}(e_1-e_2)+\frac{1}{3}(e_2-e_3)
;\quad \omega_2=\frac{1}{3}(e_1-e_2)+\frac{2}{3}(e_2-e_3).
\end{align}
The finite-dimensional irreducible representations $\tau$ of $\SL_3(\R)$ are
parametrized by 
their highest weights $\Lambda_\tau=m_1\omega_1+m_2\omega_2$. If $\theta$ is 
the standard Cartan involution of $\SL_3(\R)$, then the highest weight of the
representation 
$\tau_\theta:=\tau\circ\theta$ is given by
$\Lambda_{\tau_\theta}=m_2\omega_1+m_1\omega_2$.

\begin{prop}\label{Propsldrei}
Let $\tau$ be a finite-dimensional irreducible representation of $\SL_3(\R)$ on 
a finite-dimensional vector space $V_\tau$. Then there exists a lattice $M$ 
in $V_\tau$ which is invariant under $\tau(\Gamma)$.  
\end{prop}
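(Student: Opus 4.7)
The strategy is to apply Lemma \ref{Lem1sl3} to produce a $\Q$-rational model on $G$ of $\tau\circ\phi$, and then invoke the standard fact that arithmetic subgroups of $G(\Q)$ preserve $\Z$-lattices in $\Q$-rational representations.

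Since $\SL_3$ is split over $\Q$ and $\tau$ has integral highest weight $\Lambda$, $\tau$ is the complexification of a $\Q$-rational representation $\tau_\Q\colon\SL_3\to\GL(V_\Q)$ on a finite-dimensional $\Q$-vector space, with $V_\tau=V_\Q\otimes_\Q\C$. Applying Lemma \ref{Lem1sl3} to $\tau_\Q$ yields a $\Q$-rational representation $\rho\colon G\to\GL(W_\Q)$ together with an $L$-linear intertwiner
\[
\Phi\colon W_\Q\otimes_\Q L\xrightarrow{\ \sim\ }V_\Q\otimes_\Q L,\qquad \Phi\circ\rho(g)=\tau_\Q(\phi(g))\circ\Phi\quad(g\in G(L)).
\]
Because $\rho$ is $\Q$-rational and $\mathfrak{o}^1\subset G(\Q)$ is an arithmetic subgroup, the standard construction recalled in \cite[p.~173]{PR} supplies a full-rank $\Z$-lattice $M_0\subset W_\Q$ that is stable under $\rho(\mathfrak{o}^1)$.

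To transport $M_0$ into $V_\tau$, I would fix the embedding $L\hookrightarrow\R\subset\C$ used to realize $\Gamma=\phi(\mathfrak{o}^1)$ as a subgroup of $\SL_3(\R)$, and base-change $\Phi$ along it to obtain a $\C$-linear, $\mathfrak{o}^1$-equivariant isomorphism $\Phi_\C\colon W_\Q\otimes_\Q\C\xrightarrow{\ \sim\ }V_\tau$. The submodule $M_0\oplus iM_0\subset W_\Q\otimes_\Q\C$ is a $\Z$-lattice of full real rank, and it is preserved by the $\C$-linear extension of $\rho(\mathfrak{o}^1)$ since the matrix entries of $\rho(\gamma)$ in any $\Q$-basis of $W_\Q$ already lie in $\Q$. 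Consequently $M:=\Phi_\C(M_0\oplus iM_0)$ is a $\Z$-lattice in $V_\tau$ invariant under $\tau(\Gamma)$, which is exactly the conclusion of the proposition.

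The genuine content is packed into Lemma \ref{Lem1sl3} (already in hand, by reduction to the argument of \cite[Lemma 3.1]{MaM}); the rest is formal manipulation. The only step needing care is to use one and the same embedding $L\hookrightarrow\R$ both when descending $\Phi$ and when identifying $\mathfrak{o}^1\subset G(\R)$ with $\Gamma\subset\SL_3(\R)$, so that the intertwining identity, a priori valid for $\gamma\in G(\Q)\subset G(L)$, really does base-change to the $\Gamma$-equivariance of $\Phi_\C$ at the real (and hence complex) level.
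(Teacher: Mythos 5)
Your proposal follows the paper's own route exactly: deduce that $\tau$ is defined over $\Q$ because $\SL_3$ is $\Q$-split (Tits), apply Lemma~\ref{Lem1sl3} to obtain a $\Q$-rational representation of $G$ that is $L$-equivalent to $\tau\circ\phi$, invoke the standard fact from \cite[p.~173]{PR} that arithmetic subgroups of $G(\Q)$ stabilize a $\Z$-lattice in any $\Q$-rational representation, and then transport that lattice through the $L$-rational intertwiner $\Phi$. The paper compresses the final transport into the one-line remark ``Since $\Gamma=\phi(\mathfrak{o}^1)$, the Proposition follows,'' whereas you make it explicit, including the point that one must use the same real embedding of $L$ to base-change $\Phi$ and to realize $\Gamma=\phi(\mathfrak{o}^1)$ inside $\SL_3(\R)$; that care is warranted and the argument is sound. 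One small remark on your choice of lattice: since $L$ is totally real and $\Phi$ is $L$-linear, $\Phi(M_0)$ already lies in the real form $V_\Q\otimes_\Q\R$ of $V_\tau$ and is a $\Gamma$-stable lattice of $\Z$-rank $\dim_\C V_\tau$, which is the rank the paper implicitly uses in Theorem~\ref{Thrm2} (where $\rnk_\Z M_\Lambda(m)\sim m^2/2$); your $\Phi_\C(M_0\oplus iM_0)$ doubles this rank. That is perfectly fine for the Proposition as stated, but would introduce an extra factor of two in the rank asymptotics downstream, so $\Phi(M_0)$ is the more economical choice.
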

\begin{proof}
Since $T$ is $\Q$-split $\tau$ is defined over $\Q$  \cite{Tits}[Proposition
2.3]. 
By Lemma \ref{Lem1sl3}, there exists a rational representation $\tau'$ of $G$
on 
a finite-dimensional $\Q$-vector space $V(\tau')$  
which over $L$ is equivalent to $\tau\circ\phi$. Since $\mathfrak{o}^1$ is 
an arithmetic subgroup of $G(\Q)$, there exists a lattice in $V(\tau')$ which 
is stable under $\tau'(\mathfrak{o}^1)$, see for example \cite[page 173]{PR}.
Since $\Gamma=\phi(\mathfrak{o}^1)$, 
the Proposition follows. 
\end{proof}

We can now turn to the proof of Theorem \ref{Thrm2}. Let $\widetilde X=
\SL_3(\R)/\SO(3)$ and $X=\Gamma\bs\widetilde X$, where $\Gamma\subset \SL_3(\R)$
is an arithmetic subgroup as above. Chose a $\SL_3(\R)$-invariant metric on 
$\widetilde X$ and equip $X$ with the induced metric. Let $\Lambda\in\hL_\C^*$
be a highest
weight. Assume that  $\Lambda$ satisfies $\Lambda\neq\Lambda_\theta$. Then the 
same holds for each weight $m\Lambda$, $m\in\N$. Let $\tau_\Lambda(m)$ be the 
irreducible finite-dimensional 
representation on $V_\Lambda(m)$ with highest weight $m\Lambda$. Let 
$E_{\tau_\Lambda(m)}$ be the flat vector bundle over $X$ associated to 
$\tau_\Lambda(m)$. By \cite[Chapter VII, Theorem 6.7]{BW} we have 
\begin{equation}\label{vancoh3}
H^*(X,E_{\tau_\Lambda(m)})=0.
\end{equation}
Let $T_X(\tau_\lambda(m))$ be the analytic torsion with respect to any
Hermitian fibre metric in $E_{\tau_\Lambda(m)}$. By \eqref{vancoh3} and
\cite[Corollary 2.7]{Mu1}, $T_X(\tau_\Lambda(m))$ is independent of the 
choice of metrics on $X$ and in $E_{\tau_\Lambda(m)}$. 
Let $M_\Lambda(m)\subset V_\Lambda(m)$ be an arithmetic $\Gamma$-module, which
exists by Proposition \ref{Propsldrei}. By \eqref{vancoh3},
$H^*(X,M_\Lambda(m))$
is a finite abelian group and by \eqref{cohotor} we have
\begin{equation}\label{cohotor3}
\log T_X(\tau_\lambda(m))=\sum_{q=0}^5(-1)^{q+1}\log |H^q(X,M_\Lambda(m))|.
\end{equation}
Using Theorem 1.1 and Corollary 1.5 of \cite{MP}, the proof of  
Theorem \ref{Thrm2} follows.

\end{document}